    \def\ps@copyright{\ps@empty
    \def\@oddfoot{\hfil\small\copyright 2012, \SMF}}
\newcommand{\SMF}{Soci\'et\'e Ma\-th\'e\-Ma\-ti\-que de France}
\newcommand{\BibTeX}{{\scshape Bib}\kern-.08em\TeX}
\newcommand{\T}{\S\kern .15em\relax }
\newcommand{\AMS}{$\mathcal{A}$\kern-.1667em\lower.5ex\hbox
        {$\mathcal{M}$}\kern-.125em$\mathcal{S}$}
\newtheorem*{maintheo}{{Theorem}}
\newcommand{\ra}{\rightarrow}
\newcommand{\ul}{\underline}
\newcommand{\rA}{\mathrm{A}}
\newcommand{\rD}{\mathrm{D}}
\newcommand{\rG}{\mathrm{G}}
\newcommand{\rE}{\mathrm{E}}
\newcommand{\rH}{\mathrm{H}}
\newcommand{\rB}{\mathrm{B}}
\newcommand{\rC}{\mathrm{C}}
\newcommand{\rT}{\mathrm{T}}
\newcommand{\rW}{\mathrm{W}}
\newcommand{\rX}{\mathrm{X}}
\newcommand{\rV}{\mathrm{V}}
\newcommand{\rS}{\mathrm{S}}
\newcommand{\rU}{\mathrm{U}}
\newcommand{\rM}{\mathrm{M}}
\newcommand{\rN}{\mathrm{N}}
\newcommand{\rR}{\mathrm{R}}
\newcommand{\rQ}{\mathrm{Q}}
\newcommand{\rI}{\mathrm{I}}
\newcommand{\rp}{\mathrm{p}}
\newcommand{\e}{\mathrm{e}}
\newcommand{\lieg}{\mathfrak{g}}
\newcommand{\liet}{\mathfrak{t}}
\newcommand{\ent}{\mathbb{Z}}
\newcommand{\aff}{\mathbb{A}}
\newcommand{\rat}{\mathbb{Q}}
\newcommand{\compl}{\mathbb{C}}
\newcommand{\rGa}{\mathbb{G}_{\mathrm{a}}}
\newcommand{\rGm}{\mathbb{G}_{\mathrm{m}}}
\newcommand{\ratc}{\overline{\mathbb{Q}}}
\newcommand{\cL}{\mathcal{L}}
\newcommand{\Sym}{\mathrm{Sym}}
\newcommand{\Img}{\mathrm{Im}}
\newcommand{\tor}{\mathrm{Tor}}
\newcommand{\Dim}{\mathrm{dim}}
\newcommand{\Isom}{\underline{\mathrm{Isom}}}
\newcommand{\Isomext}{\underline{\mathrm{Isomext}}}
\newcommand{\adqG}{\mathrm{G}//\mathrm{G}}
\newcommand{\spec}{\mathrm{Spec}}
\newcommand{\rGL}{\mathrm{GL}}
\newcommand{\rPGL}{\mathrm{PGL}}
\newcommand{\Hom}{\mathrm{Hom}}
\newcommand{\Aut}{\mathrm{Aut}}
\newcommand{\ad}{\mathrm{ad}}
\newcommand{\Out}{\mathrm{Out}}
\newcommand{\Nor}{\mathrm{Norm}}
\newcommand{\Cent}{\mathrm{Centr}}
\newcommand{\Dyn}{\underline{\mathrm{Dyn}}}
\newcommand{\expo}{\mathrm{exp}}
\newcommand{\cV}{\mathcal{V}}
\newcommand{\cW}{\mathcal{W}}
\newcommand{\cE}{\mathscr{E}}
\newcommand{\cO}{\mathscr{O}}
\title{Adjoint quotients of reductive groups}
\date{\today}
\author{Ting-Yu Lee}
\address{DMA-Ecole normal sup\'erieure \\
45 Rue d'Ulm, F-75230 Paris Cedex 05, France}
\email{Ting-Yu.Lee@ens.fr} \keywords{Chevalley group scheme, adjoint
quotient, Steinberg's cross-section,
fundamental representations. \\
{\bf   MSC 2010:\!} 14L15, 14L24, 14L30, 13A50, 20G05, 20G35.}
\begin{document}
\def\smfbyname{}
\newenvironment{fact}{\begin{enonce}{Fact}}{\end{enonce}}
\begin{abstract}
Let $\rG$ be a reductive group over a commutative ring $k$. In this
article, we prove that the adjoint quotient $\adqG$ is stable under
base change. Moreover, if $\rG$ has a maximal torus $\rT$, then the
adjoint quotient of the torus $\rT$ by its Weyl group will be
isomorphic to $\adqG$. Then we focus on the semisimple simply
connected group $\rG$ of the constant type. In this case, $\adqG$ is
isomorphic to the Weil restriction $\underset{\rD/\spec
k}{\prod}\aff^{1}_\rD$, where $\rD$ is the Dynkin scheme of $\rG$.
Then we prove that for such $\rG$, the Steinberg's cross-section can
be defined over $k$ if $\rG$ is quasi-split and without
$\rA_{2m}$-type components.
\end{abstract}

\begin{altabstract}
Soit $k$ un anneau commutatif et $\rG$ un groupe r\'eductif sur $k$.
Dans cet article, on va definir le quotient adjoint $\adqG$ de $\rG$
sur $k$ et d\'emontrer que la construction est stable par changement
de base. En plus, si $\rG$ poss\`{e}de un tore maximal $\rT$, le
quotient adjoint de $\rT$ par son groupe de Weyl est isomorphe \`{a}
$\adqG$. Dans la derniere section, on se concentre sur le cas $\rG$
semi-simple simplement connexe de type constant. Dans ce cas,
$\adqG$ est isomorphe \`{a} la restriction de Weil
$\underset{\rD/\spec k}{\prod}\aff^{1}_\rD$, o\`{u} $\rD$ est le
sch\'{e}ma de Dynkin. Si $\rG$ est de plus quasi-d\'{e}ployable et
sans composantes de type $\rA_{2m}$, on peut construire la
cross-section de Steinberg sur $k$.

\end{altabstract}

\maketitle

%\tableofcontents

\section{Introduction} Let $k$ be a commutative ring
and $\rG$ be a reductive group over $k$. In this article, we want to
discuss the adjoint quotient of $\rG$ which is denoted by $\adqG$.

Roughly speaking, the adjoint quotient of $\rG$ is determined by
those regular functions of $\rG$
which are constants on the conjugacy classes of $\rG$. %Let  $k[\rG]^{\rG}$
%be the set of regular functions of $\rG$
%which are constants on the conjugacy classes of $\rG$ and \\$\adqG=\spec (k[\rG]^{\rG})$.
Suppose that $\rG$ contains a maximal torus $\rT$ and let $\rW$ be
the corresponding Weyl group. Then the $\rG$-conjugation action on
the regular functions of $\rG$ induces a $\rW$-conjugation action on
the regular functions of $\rT$. Let $\rT//\rW$ be the adjoint
quotient of $\rT$ by $\rW$. The natural restriction on regular
functions induces a natural morphism $$\iota:\rT//\rW\ra \adqG.$$
%induces a natural homomorphism $$i:k[\rG]^{\rG}\ra k[\rT]^{\rW}.$$
When $k$ is an algebraically closed field, $i^{\sharp}$ is an
isomorphism. One can find the classical treatment about adjoint
quotients over algebraically closed fields in Steinberg's
paper~\cite{St} \S 6, or in Humphreys's book~\cite{Hum} Chap. 3.

In this article, we will show that the same result holds for any
commutative ring $k$. Namely, we will prove the following theorem:
\begin{maintheo}
Let $k$ be a commutative ring and $\rG$ be a reductive group defined
over $k$. Suppose that $\rG$ contains a maximal $k$-torus $\rT$. Let
$\rW$ be the corresponding Weyl group of $\rT$. Then
$\rT//\rW\xrightarrow{\sim} \adqG$.
\end{maintheo}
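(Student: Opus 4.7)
The plan is to reduce the statement to the universal Chevalley situation over $\ent$, and then to prove the isomorphism there by comparing $\iota^{\sharp}$ with the character map of the fundamental representations. The reduction proceeds in two steps. First, since $\rT$ is a maximal torus of $\rG$, the pair $(\rG,\rT)$ splits after some faithfully flat étale base change $\spec k'\to\spec k$. The formations of $\rT//\rW$ and of $\adqG$ both commute with this base change by the base change stability results established earlier, and the morphism $\iota$ is natural, so faithfully flat descent reduces the theorem to the split case. Second, a split pair $(\rG,\rT)$ is pulled back from a pinned Chevalley pair $(\rG_\ent,\rT_\ent)$ over $\ent$; base change stability, applied once more, reduces the theorem to the single assertion that $\iota^{\sharp}\colon \cO(\rG_\ent)^{\rG_\ent}\to \cO(\rT_\ent)^{\rW}$ is an isomorphism of $\ent$-algebras.

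To prove this last assertion I would proceed in two steps. For injectivity, I would use the fact that on every geometric fibre of $\rG_\ent\to\spec\ent$ the semisimple locus is Zariski dense and every geometric semisimple element is conjugate to an element of $\rT_\ent$; combined with flatness of both invariant rings over $\ent$ and with the classical fibrewise statement, this forces $\iota^{\sharp}$ to be injective. For surjectivity, I would first treat the semisimple simply connected case, and then reduce the general reductive case to it by a dévissage through the simply connected cover and the connected centre. In the simply connected case, I would build a section of $\iota^{\sharp}$ by sending the Weyl-orbit sum of each fundamental weight of $\rT_\ent$ to the character of the corresponding fundamental representation of $\rG_\ent$, viewed as an element of $\cO(\rG_\ent)^{\rG_\ent}$; an integral version of Chevalley--Shephard--Todd for the $\rW$-action on the character lattice then ensures that these orbit sums generate $\cO(\rT_\ent)^{\rW}$, yielding surjectivity.

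The principal obstacle in this program is the surjectivity over $\ent$. One needs both an honest $\ent$-integral Chevalley--Shephard--Todd statement for the $\rW$-action on the character lattice, and the identification of each fundamental character with a regular $\ent$-valued function on $\rG_\ent$; the latter is standard for Chevalley $\ent$-forms of simply connected groups but must be made explicit. The secondary subtlety is the dévissage from reductive to simply connected: one must control how adjoint quotients behave under the central isogeny relating $\rG$ to $\rG^{\mathrm{sc}}$ and its connected centre, and this must be argued uniformly in the base ring $k$, which is where the base change stability of $\adqG$ proved earlier becomes essential.
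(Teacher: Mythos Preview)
Your reduction to the split Chevalley situation over $\ent$ via \'etale descent and base-change stability, and your injectivity argument over $\ent$, match the paper's proof of Theorem~\ref{2.1}(2) and the injectivity half of Theorem~\ref{1:1}.

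The genuine difference is in the surjectivity of $\iota^{\sharp}$ over $\ent$. You restrict to the simply connected case and then propose a d\'evissage through the simply connected cover and the connected centre. The paper does \emph{not} do this: it proves surjectivity directly and uniformly for \emph{every} split reductive $\ent$-group, with no reduction to the simply connected case. The mechanism is that for \emph{each} dominant weight $\lambda\in\rM^{+}$ (not only the fundamental ones) one takes a $\ent$-lattice $\rN(\lambda)$ inside the induced module $\rH^{0}(\rG_{\rat}/\rB^{-}_{\rat},\cL(\lambda)_{\rat})$; its character restricts to $\rT$ as $\Sym(\e^{\lambda})+\sum_{\mu<\lambda,\,\mu\in\rM^{+}}n_{\mu}\Sym(\e^{\mu})$. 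Since the $\Sym(\e^{\lambda})$ for $\lambda\in\rM^{+}$ already form a free $\ent$-basis of $\ent[\rT]^{\rW}$ by the elementary permutation-module description (Lemma~\ref{0:8}), a triangular change-of-basis lemma (Lemmas~\ref{0:4} and~\ref{0:5}) finishes the argument. No ``integral Chevalley--Shephard--Todd'' is invoked; the relevant input is just that $\ent[\rT]^{\rW}$ is free on the orbit sums.

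What this buys the paper is precisely the elimination of your d\'evissage, which you yourself flag as the delicate point: controlling how $\ent[\rG]^{\rG}$ relates to $\ent[\rG^{\mathrm{sc}}]^{\rG^{\mathrm{sc}}}$ and to the centre across a central isogeny with possibly torsion kernel is not obviously clean over $\ent$. The paper's route avoids this entirely at the modest cost of producing one integral representation per dominant weight rather than finitely many fundamental ones. (A small side remark: the map you describe, sending $\Sym(\e^{\lambda_i})$ to $\chi_i$, is not literally a section of $\iota^{\sharp}$, since $\chi_i|_{\rT}\neq\Sym(\e^{\lambda_i})$ in general; what you actually need, and what the paper uses, is the triangularity relation between the two families.)
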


The strategy we take here is actually the same one used for k an
algebraically closed field. In $\S$ 3, we will prove the Theorem
over $\ent$ and generalize the result to arbitrary commutative rings
in $\S$ 4.

In $\S$ 5, we focus on the adjoint quotient of the semisimple simply
connected group scheme of constant type. For such group $\rG$,
$\adqG$ is isomorphic to the Weil restriction $\underset{\rD/\spec
k}{\prod}\aff^{1}_\rD$, where $\rD$ is the Dynkin scheme of $\rG$.
Moreover, we prove that the Steinberg's cross-section (\cite{St},
Thm. 1.4) can be defined over arbitrary commutative ring $k$ if
$\rG$ is quasi-split and without $\rA_{2m}$-type components. At the
end of the article, we prove that for a semisimple simply connected
group scheme $\rG$ of constant type, it always contains a semisimple
regular element over a semi-local ring $k$, and the centralizer of a
semisimple regular element is a maximal torus of $\rG$.

%%%%%%%%%%%%%%%%%%%%%%%%%%%%%%%%%%%%%%%%%%%%%%%%%%%%%%%%%%%%%%%%%%%%%%%%%%%%%%%%%%%%5

\begin{section}{Notations and Definitions}
Let $k$ be a commutative ring. For an affine $k$-scheme $\rX$, we
let $k[\rX]$ be the ring of regular functions of $\rX$. For a
$k$-scheme $\rX$, a $k$-algebra $\rA$, we let $\rX_{\rA}$ be the
fiber product $\rX\times_{k}\spec\rA$. Let $\rGm$ (resp. $\rGa$) be
the multiplicative (resp. additive) group defined over $\ent$.

For a $k$-module $\rV$, we regard it as a functor by defining
\begin{center}
$\rV(\rA)=\rV\otimes_{k}\rA$, for all $k$-algebras $\rA$.
\end{center}
In order to define the adjoint quotient of $\rG$ over an arbitrary
commutative ring $k$, we first define a $\rG$-conjugation action on
the $k$-module $\rV:=k[\rG]$. Let $\rA$ be a $k$-algebra and
$g\in\rG(\rA)$, $f\in\rV(\rA)$, we define
\begin{center}
$(g.f)(x)=f(gxg^{-1})$, for all $\rA$-algebras $\rA^{\prime}$, and
for all $x\in\rG(\rA^{\prime})$.
\end{center}

%Let $\ul{\Hom}_{k-sch}(\rG,{\rGa}_{,k})$ be the functor defined by
%\begin{center}
%$\ul{\Hom}_{k-sch}(\rG,{\rGa}_{,k})(\rA):=\Hom_{\rA-sch}(\rG_\rA,{\rGa}_{,\rA})$,
%\end{center}
%for any $k$-algebra $\rA$. Note that the functor
%$\ul{\Hom}_{k-sch}(\rG,{\rGa}_{,k})$ is isomorphic to the functor
%$\rV$ because for any $k$-algebra $\rA$,
%$$\ul{\Hom}_{k-sch}(\rG,{\rGa}_{,k})(\rA)=\rA[\rG]=\rV(\rA).$$
%Therefore we can also regard $\rV$ as the functor
%$\ul{\Hom}_{k-sch}(\rG,{\rGa}_{,k})$.

%For any $k$-algebra $\rA$, any
%$\sigma\in\rG(\rA)$, and $f\in\rV(\rA)$ viewed as an element in
%${\Hom}_{\rA-sch}(\rG_{\rA},{\rGa}_{,\rA})$, let $(\sigma f)$ be an
%element in ${\Hom}_{\rA-sch}(\rG_{\rA},{\rGa}_{,\rA})$ defined as
%$(\sigma f)(x)=f(\sigma^{-1}x\sigma)$, for any $\rA$-algebra $\rR$
%and $x\in\rG(\rR)$. In this way, we define a $\rG$-conjugation
%action on V. For an element $f$ in $\rV(k)$ and a $k$-algebra $\rA$,
%we let $f_\rA$ denote the image of $f$ in $\rV(\rA)$ and
Let $c:\rV\ra k[\rG]\otimes\rV$ be the comodule map corresponding to
the conjugation action defined above. We define
\begin{eqnarray*}
&&k[\rG]^{\rG}:=\{f\in\rV(k)|\ \sigma. f_\rA=f_\rA,\ \forall\
\sigma\in\rG(\rA),\ \forall\ k-algebras\ \rA\},
\end{eqnarray*}
Let $\adqG=\spec (k[\rG]^{\rG})$  be the \emph{adjoint quotient} of
$\rG$. Suppose that $\rG$ contains a maximal torus $\rT$ and let
$\rW$ be the corresponding Weyl group. Then the $\rG$-conjugation
action on $k[\rG]$ induces a $\rW$-conjugation action on $k[\rT]$
and let $\rT//\rW=\spec (k[\rT]^{\rW}$). The natural restriction
from $k[\rG]$ to $k[\rT]$ induces a natural homomorphism
$$i:k[\rG]^{\rG}\ra k[\rT]^{\rW}.$$ When $k$ is an algebraically
closed field, $i$ is an isomorphism. Namely,
\begin{theo}\label{0:3}
Let $k$ be an algebraically closed field. Let $\rG$ be a semisimple
$k$-group and $\rT$ be a maximal torus of $\rG$. Let $\rW$ be the
Weyl group with respect to $\rT$. Then the restriction map
$i:k[\rG]^{\rG}\ra k[\rT]^{\rW}$ is an isomorphism. Furthermore, if
$\rG$ is semisimple simply connected, then $k[\rG]^{\rG}$ is freely
generated as a commutative $k$-algebra by the characters of the
irreducible representations with respect to the fundamental highest
weights.
\end{theo}
\begin{proof}
The injectivity relies on the fact that the semisimple regular
elements in $\rG$ form an open dense subset.

The idea to prove the surjectivity is to find a set of
representations \\$\rho:\rG\ra\rGL_{n,k}$ such that the
corresponding set of characters restricted to $\rT$ generates
$k[\rT]^{\rW}$. For more details, one can refer to~\cite{Hum}
3.2,~\cite{St} \S 6 and~\cite{J} Part II, 2.6.
\end{proof}
In the following section, we want to repeat some arguments in the
standard proof to show that those techniques fit quite well for
reductive groups. Moreover, we will generalize these arguments from
fields to $\ent$ when $\rG$ is a split reductive group scheme over
$\ent$, and $\rT$ is a maximal $\ent$-torus of $\rG$.
\end{section}
%%%%%%%%%%%%%%%%%%%%%%%%%%%%%%%%%%%%%%%%%%%%%%%%%%%%%%%%%%%%%%%%%%%%%
\begin{section}{The adjoint quotient over $\ent$}
In this section, we will show that a result similar to
Theorem~\ref{0:3} also holds over $\ent$. Namely,
\begin{theo}\label{1:1}
Let $\rG$ be a split reductive $\ent$-group and $\rT$ be a maximal
$\ent$-torus of $\rG$. Let $\rW$ be the Weyl group with respect to
$\rT$. Then the restriction map $i:\ent[\rG]^{\rG}\ra
\ent[\rT]^{\rW}$ is an isomorphism.
\end{theo}
As the first step, we want to generalize the techniques used to
prove Theorem~\ref{0:3}.

\begin{subsection}{The $\rW$-conjugation action on tori}
Let $k$ be a commutative ring and $\rT$ be a split torus over $k$.
Let $\rM$ be the character group of $\rT$ which can be regarded as
an additive group, and $\rM^{\vee}$ be the dual of $\rM$ considered
as $\ent$-module. Let $\mathcal{R}=(\rM,\rM^{\vee},\rR,\rR^{\vee})$
be a reduced root datum with respect to $\rT$ and $\rW$ be the
corresponding Weyl group. Let $\Pi$ be a system of simple roots of
$\rR$. Let $\rM^{+}$ be the set of characters $\lambda$ which
satisfy $(\alpha^{\vee},\lambda)\geq 0$ for all
$\alpha^{\vee}\in\Pi^{\vee}$. A character $\lambda$ is called
\emph{dominant} if $\lambda\in\rM^{+}$.

Here we want to look at $k[\rT]^{\rW}$ more closely. Since
$k[\rT]=k[\rM]$ and $\rW$ permutes $\rM$, we observe that $k[\rT]$
is a $\rW$-permutation module under the conjugation action. Let
$\e^{\lambda}$ be the element in $k[\rT]$ corresponding to $\lambda$
in $\rM$. Then $k[\rT]^{\rW}$ is generated by elements of the form
$\Sym(\e^{\lambda}):=\underset{w\in\rW/\rW_{\lambda}}{\sum}\e^{w(\lambda)}$,
where $\lambda\in\rM$ and $\rW_{\lambda}$ is the stabilizer of
$\lambda$. Since for each $\lambda\in\rM$ we can find a $w\in\rW$
such that $w\lambda$ is in $\rM^{+}$, $k[\rT]^{\rW}$ is a free
$k$-module generated by the set $\{\Sym(\e^{\lambda})|\
\lambda\in\rM^{+}\}$ (ref.~\cite{Bou}, Chap. VI, \S3, Lemma 3),
which in turn means that $k[\rT]^{\rW}$ is determined by the
$\rW$-action on $\rM$ and therefore is stable under arbitrary base
change. We rephrase this fact as a lemma:
\begin{lemm}\label{0:8}
Let $k$, $\rT$ and $\rW$ be defined as above. Then the ring
$k[\rT]^{\rW}$ is a free $k$-module generated by the set
$\{\Sym(\e^{\lambda})\ |\ \lambda\in\rM^{+}\}$ and hence is
determined by the $\rW$-action on $\rM$. In particular, we have
$k[\rT]^{\rW}\otimes k'=k'[\rT]^{\rW}$, for any $k$-algebra $k'$.
\end{lemm}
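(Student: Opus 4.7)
The plan is to use the fact that $k[\rT]=k[\rM]$ is the group algebra of $\rM$, hence a free $k$-module with basis $\{\e^{\lambda}\}_{\lambda\in\rM}$, and that $\rW$ acts on this basis by permutation (via its action on $\rM$). Under such a permutation action, the invariants of a free module are controlled entirely by the orbit structure, which is a combinatorial datum independent of $k$.

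More precisely, I would first write any $f\in k[\rT]^{\rW}$ uniquely as a finite sum $f=\sum_{\lambda\in\rM}c_{\lambda}\e^{\lambda}$ with $c_{\lambda}\in k$, and observe that $w.f=f$ for all $w\in\rW$ forces $c_{\lambda}=c_{w\lambda}$ for all $w$. Grouping by $\rW$-orbits therefore expresses $f$ as a $k$-linear combination of the orbit sums $\Sym(\e^{\lambda})$. To pass from orbits to $\rM^{+}$, I would then invoke the cited fact from Bourbaki that $\rM^{+}$ is a fundamental domain for the $\rW$-action on $\rM$, i.e.\ each $\rW$-orbit meets $\rM^{+}$ in exactly one element. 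This gives a bijection between $\rM^{+}$ and $\rW\backslash\rM$, so the family $\{\Sym(\e^{\lambda})\,|\,\lambda\in\rM^{+}\}$ is precisely the set of distinct orbit sums and forms a $k$-basis of $k[\rT]^{\rW}$.

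For the base change statement, I would note that both the index set $\rM^{+}$ and the structure constants of the orbit sums depend only on the combinatorial datum $(\rM,\rW)$ and not on $k$. Hence, for any $k$-algebra $k'$, tensoring the $k$-basis of $k[\rT]^{\rW}$ with $k'$ produces a $k'$-basis of the same form for $k'[\rT]^{\rW}$, giving the identification $k[\rT]^{\rW}\otimes_{k}k'=k'[\rT]^{\rW}$.

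There is essentially no obstacle in this argument: the only nontrivial input is the combinatorial statement that $\rM^{+}$ is a fundamental domain for $\rW$ on $\rM$, which is already available from Bourbaki and is independent of the base ring. The rest amounts to the elementary fact that invariants of a permutation module on a free module are themselves free, with basis given by orbit sums.
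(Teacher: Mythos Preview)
Your proposal is correct and follows essentially the same approach as the paper. In fact, the paper does not give a separate proof of this lemma at all: the lemma is explicitly introduced as a ``rephrasing'' of the preceding paragraph, which argues exactly as you do---$k[\rT]=k[\rM]$ is a $\rW$-permutation module, the invariants are spanned by orbit sums $\Sym(\e^{\lambda})$, and the Bourbaki reference (Chap.~VI, \S3) supplies the fundamental-domain property of $\rM^{+}$, after which stability under base change is immediate from the purely combinatorial description of the basis.
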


However, besides the basis $\{\Sym(\e^{\lambda})|\
\lambda\in\rM^{+}\}$, we sometimes need to choose an alternative
basis to simplify our proof. The next two lemmas are useful for this
purpose:

\begin{lemm}\label{0:4}
Let $I$ be an ordered set satisfying the following condition:
\begin{itemize}
  \item[(Min)] Each nonempty subset of $\rI$ contains a minimal element.
\end{itemize}
Let $\rA$ be a commutative ring, $E$ be an $\rA$-module, and
$\{e_i\}_{i\in I}$ be a basis of $E$. Let $\{x_i\}_{i\in I}$ be a
family of elements such that $x_i=e_i+\underset{j<i}{\sum} a_{i,j}\
e_j$, where $a_{i,j}\in\rA$ and only finitely many $a_{i,j}$ are
nonzero. Then also $\{x_i\}_{i\in\rI}$ is a basis of $E$.
\end{lemm}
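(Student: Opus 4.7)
The plan is to treat spanning and linear independence separately, invoking the hypothesis (Min) only for the first.

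For spanning, I would argue by contradiction. Let $S$ be the set of $i \in I$ such that $e_i$ does not lie in the $\rA$-submodule generated by $\{x_j\}_{j\leq i}$, and suppose $S$ were nonempty. By (Min), pick a minimal element $i_0 \in S$. For every $j < i_0$, minimality of $i_0$ gives $e_j \in \langle x_k : k \leq j\rangle_\rA \subseteq \langle x_k : k < i_0\rangle_\rA$. Since only finitely many $a_{i_0,j}$ are nonzero, the finite rewriting $e_{i_0} = x_{i_0} - \sum_{j<i_0} a_{i_0,j}\, e_j$ is well defined and places $e_{i_0}$ in $\langle x_k : k \leq i_0\rangle_\rA$, contradicting $i_0 \in S$. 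Thus every $e_i$ lies in the submodule generated by the $x_j$'s, and since the $e_i$'s span $E$, so do the $x_i$'s.

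For linear independence, no hypothesis on $I$ is needed. Suppose $\sum_{i \in F} c_i x_i = 0$ for some finite $F \subseteq I$, and let $F' = \{i \in F : c_i \neq 0\}$. If $F'$ were nonempty, I would choose a maximal element $i_0$ of the finite poset $F'$, expand the relation in the basis $\{e_i\}$, and extract the coefficient of $e_{i_0}$, which equals $c_{i_0} + \sum_{i \in F',\, i > i_0} c_i a_{i,i_0}$. Maximality of $i_0$ in $F'$ makes the second sum empty, so $c_{i_0} = 0$, contradicting $i_0 \in F'$.

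The whole argument is a routine unitriangular change-of-basis. The only delicate point, and the sole reason the hypothesis (Min) is invoked, is that $I$ may be infinite and only partially ordered; without (Min) one could not run the recursive elimination of the $e_j$'s in the spanning step. The linear independence step, by contrast, is automatic, since the support of any relation is finite and every finite poset admits maximal elements.
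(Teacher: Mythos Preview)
Your argument is correct. The paper does not supply its own proof of this lemma; it simply cites Bourbaki, \textit{Groupes et Alg\`ebres de Lie}, Chap.~VI, \S3, Lemme~4, so there is no ``paper's approach'' to compare against beyond that reference. What you have written is precisely the standard unitriangular change-of-basis argument one expects (and essentially what Bourbaki does): use condition~(Min) to run a well-founded induction for spanning, and pick a maximal index in the finite support of a hypothetical relation for linear independence. One small remark: in the linear-independence step you say ``no hypothesis on $I$ is needed,'' but you do use that $F'$ is a finite \emph{poset} in order to extract a maximal element; this is of course part of the standing assumption that $I$ is partially ordered, so nothing is missing.
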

\begin{proof}
~\cite{Bou}, Chap. VI, \S3, Lemma 4.
\end{proof}

Let us define a partial order on $\rM$ with respect to $\Pi$ by
$\lambda\geq0$ if $\lambda=\underset{s\in\Pi}{\sum} a_{s}s$ where
$a_s$'s are nonnegative integers.
\begin{lemm}\label{0:5}
Given an element $\lambda\in\rM^{+}$, the set
$\rI(\lambda):=\{\mu\in\rM^{+}\ |\ \mu\leq\lambda\}$ is finite.
\end{lemm}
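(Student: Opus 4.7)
The plan is to reduce the statement to a standard lattice-point counting argument inside a Euclidean ball. Since $\rW$ is a finite group acting on the free $\ent$-module $\rM$ of finite rank (recall $\rT$ is split, so $\rM\simeq\ent^n$), I would first fix a $\rW$-invariant positive definite symmetric bilinear form $(\cdot,\cdot)$ on the real vector space $V:=\rM\otimes_{\ent}\re$, obtained by averaging an arbitrary inner product over $\rW$. Using the $(\cdot,\cdot)$-induced identification $V\simeq V^{*}$, each simple coroot $\alpha^{\vee}$ becomes a positive scalar multiple of the corresponding simple root $\alpha$, so the dominance condition $(\alpha^{\vee},\nu)\geq 0$ for $\nu\in\rM^{+}$ rephrases as $(\alpha,\nu)\geq 0$ for every $\alpha\in\Pi$.

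The core of the argument is then the inequality $(\mu,\mu)\leq(\lambda,\lambda)$ for every $\mu\in\rI(\lambda)$. I would prove it via the identity
$$(\lambda,\lambda)-(\mu,\mu)=(\lambda-\mu,\lambda+\mu),$$
combined with two observations: first, $\lambda-\mu=\sum_{s\in\Pi}a_{s}\,s$ with $a_{s}\geq 0$ by the very definition of the partial order; second, $\lambda+\mu$ is again dominant, as a sum of two dominant characters. Together with the geometric reformulation of dominance above, the second point yields $(s,\lambda+\mu)\geq 0$ for every simple root $s$, so expanding the right hand side shows it is a nonnegative combination of nonnegative terms.

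To finish, I would invoke the fact that $\rM$, being the character group of a split torus, is a finitely generated free abelian group, so it sits inside $V$ as a full-rank lattice. The inequality confines $\rI(\lambda)$ to the intersection of this lattice with the closed Euclidean ball of radius $\sqrt{(\lambda,\lambda)}$, and the intersection of a lattice with a bounded subset of a finite-dimensional real vector space is finite.

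The only genuinely substantive step is the inequality $(\mu,\mu)\leq(\lambda,\lambda)$; the subtlety there is that one must explicitly invoke the compatibility between roots and coroots provided by the $\rW$-invariant form in order to convert the coroot-pairing definition of dominance into the root-pairing statement $(s,\lambda+\mu)\geq 0$. Every other ingredient is either a generality about finite group actions on lattices or a direct unpacking of definitions.
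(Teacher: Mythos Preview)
Your argument is correct and complete. The paper proceeds differently: it first cites Bourbaki (Chap.~VI, \S3, Prop.~3) for the semisimple case, and then in the general reductive case uses the canonical isogeny $p:\mathrm{corad}(\mathcal{R})\times\mathrm{ss}(\mathcal{R})\to\mathcal{R}$ of some degree $d$ to transport the inequality $\mu\leq\lambda$ to $d\mu\leq d\lambda$ in the product, where the coradical factor carries the trivial order and the semisimple factor is covered by the citation. Your route bypasses this reduction entirely: by choosing the $\rW$-invariant inner product to be positive definite on \emph{all} of $V=\rM\otimes\re$ (not just on the root span), the bound $(\mu,\mu)\leq(\lambda,\lambda)$ and the lattice-in-ball conclusion go through uniformly, with the central directions handled automatically since $\lambda-\mu$ lies in the root lattice and contributes nothing there. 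In effect you are reproving Bourbaki's proposition and simultaneously observing that its Euclidean argument needs no semisimplicity hypothesis, which is more self-contained; the paper's approach, on the other hand, makes the structural decomposition of the root datum explicit and keeps the core finiteness as a black-box citation.
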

\begin{proof}

If the root datum $\mathcal{R}$ is semisimple, then one can find a
proof of the above lemma in ~\cite{Bou}, Chap. VI, \S3, Prop. 3. For
the reduced root datum $\mathcal{R}$, let
$\mathrm{corad}(\mathcal{R})$ be the coradical of $\mathcal{R}$, and
$\mathrm{ss}(\mathcal{R})$ be the semisimple part of $\mathcal{R}$
(~\cite{SGA3}, Exp. XXI, 6.3.2, 6.5.4, and 6.5.5). The partial order
on $\mathcal{R}$ induces a partial order on
$\mathrm{ss}(\mathcal{R})$, and we define a partial order on
$\mathrm{corad}(\mathcal{R})$ as $x\leq y$ iff $x=y$. Let $p$ be the
canonical isogeny
$p:\mathrm{corad}(\mathcal{R})\times\mathrm{ss}(\mathcal{R})\ra\mathcal{R}$,
and $d$ be the degree of the isogeny. If $\mu\leq\lambda$ in
$\mathcal{R}$, then $d\mu\leq d\lambda$ in
$\mathrm{corad}(\mathcal{R})\times\mathrm{ss}(\mathcal{R})$. Hence
we reduce the case to the semisimple case, and the lemma follows.
\end{proof}
\end{subsection}
%%%%%%%%%%%%%%%%%%%%%%%%%%%%%%%%%%%%%%%%%%%%%%%%%%%%%%%%%%%%%%%%%%%%%%%%%%%%%%
\begin{subsection}{Representations associated to dominant weights}
Let $K$ be a field, $\rG$ be a split $K$-reductive group and $\rT$
be a maximal $K$-torus of $\rG$ which splits. Let
$(\rM,\rM^{\vee},\rR,\rR^{\vee})$ be the root datum with respect to
$\rT$, and fix a system of simple roots $\Pi$ of $\rR$. Let $\rB$ be
the Borel subgroup containing $\rT$ determined by $\Pi$ and
$\rB^{-}$ be the opposite Borel subgroup of $\rB$. Let $\rU$ be the
unipotent radical of $\rB$.

In this subsection, we want to associate to each character $\lambda$
which is dominant with respect to $\rB$ a representation
$\rho_\lambda$. As we have mentioned in \\$\S$ 1, this is a crucial
point to prove the surjectivity of $$i:K[\rG]^{\rG}\ra
K[\rT]^{\rW}.$$

In the beginning, we would like to recall some well-known facts over
a field.

Let $\lambda\in\rM^{+}$. Then canonical homomorphism $\rB^{-}\ra\rT$
allows us to view $\lambda$ as a homomorphism from $\rB^{-}$ to
${\rGm}_{,K}$. Thus we can define a $\rB^{-}$-action on
$\rG\times{\rGa}_{,K}$ as
 $a(g,x)=(ga^{-1},\lambda(a)x)$, for all $a\in\rB^{-}(\rA),\ g\in\rG(\rA),$ $x\in{\rGa}_{,K}(\rA)$, where $\rA$ is a $K$-algebra. In this way,
 $(\rG\times{\rGa}_{,K})/\rB^{-}$ becomes a line-bundle over $\rG/\rB^{-}$, and we denote it by $\cL(\lambda)$.
We then identify the global sections
$\rH^{0}(\rG/\rB^{-},\cL(\lambda))$ in a canonical way with the
morphisms of schemes $f:\rG\ra{\rGa}_{,K}$ satisfying
$f(g\beta)=\lambda(\beta)^{-1}f(g)$, for all $g\in\rG(\rA)$,
$\beta\in\rB^{-}(\rA)$, for any $K$-algebra $\rA$ (ref.~\cite{Borel}
4.4). Since $\rG/\rB^{-}$ is projective,
$\rH^{0}(\rG/\rB^{-},\cL(\lambda))$ is finite dimensional. Let
$\rV(\lambda)=\rH^{0}(\rG/\rB^{-},\cL(\lambda))$.

Now we consider the following left regular $\rG$-action on the
functor $\ul{\Hom}_{K-sch}(\rG,{\rGa}_{,K})$ as below. For a
$K$-algebra $\rA$, an element $\sigma\in\rG(\rA)$ and
$f\in\ul{\Hom}_{K-sch}(\rG,{\rGa}_{,K})(\rA)$, we define $l_\sigma
f:\rG_\rA\ra{\rGa}_{,\rA}$ as $(l_\sigma f)(g)=f(\sigma^{-1}g)$ for
any $\rA$-algebra $\rR$ and any $g\in\rG(\rR)$. It is known that
$\rV(\lambda)$ is a $\rG$-module under the left regular $\rG$-action
(ref.~\cite{J} I, 5.12). Let $\rV(\lambda)^{\rU}$ be the subspace
where $\rU$ acts trivially. For $\lambda$ is dominant, we have the
following proposition:
\begin{prop}\label{0:6}
Let $\lambda$ be a character of $\rT$ which is dominant with respect
to the Borel subgroup $\rB$. Then we have the following:
\begin{itemize}
   \item[(1)] $\rV(\lambda)\neq 0$.
   \item[(2)] $\Dim_{K}\rV^{\rU}=1$.
   \item[(3)] $\rT$ acts on $\rV(\lambda)^{\rU}$ by character $\lambda$.
 \end{itemize}
\end{prop}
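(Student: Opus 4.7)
The plan is to exploit the Bruhat decomposition of $\rG$. The big cell $\Omega := \rU\cdot \rB^{-}$ is open and dense in $\rG$, and the multiplication morphism $\rU\times\rB^{-}\xrightarrow{\sim}\Omega$ is an isomorphism of schemes. Having identified $\rV(\lambda)$ with morphisms $f:\rG\to\rGa$ satisfying $f(g\beta)=\lambda(\beta)^{-1}f(g)$ for $\beta\in\rB^{-}$, the $\rU$-invariance condition $(u\cdot f)(g)=f(u^{-1}g)=f(g)$ forces $f(u\beta)=f(\beta)=\lambda(\beta)^{-1}f(e)$ on the open cell. Since $\rG$ is irreducible, $f$ is determined everywhere by the single scalar $f(e)$, which already gives $\Dim_{K}\rV(\lambda)^{\rU}\leq 1$.

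For (1) and the reverse inequality in (2), the challenge is to produce a nonzero element $f\in\rV(\lambda)^{\rU}$. The previous computation suggests the formula $f_{\lambda}(u\beta):=\lambda(\beta)^{-1}$ on $\Omega$; the task is to show that this regular function on $\Omega$ extends to a morphism on all of $\rG$. This is the main obstacle, and it is precisely where dominance of $\lambda$ is used. The classical strategy first treats the fundamental weights $\varpi_{\alpha}$, where $\rV(\varpi_{\alpha})$ can be constructed from rank-one considerations attached to each simple root, and then handles a general dominant $\lambda=\sum n_{\alpha}\varpi_{\alpha}$ by realizing $\rV(\lambda)$ inside an appropriate tensor product $\bigotimes_{\alpha}\rV(\varpi_{\alpha})^{\otimes n_{\alpha}}$. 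Alternatively, since we are over a split reductive group over a field, one can base-change to $\overline{K}$ and invoke the classical construction via induction from $\rB^{-}$ (cf.~\cite{J}, II, 2.1--2.2), producing $\rV(\lambda)\neq 0$ together with an explicit highest weight vector.

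For (3), fix a nonzero $f\in\rV(\lambda)^{\rU}$. Because $\rT$ normalizes $\rU$, the subspace $\rV(\lambda)^{\rU}$ is $\rT$-stable, and by (2) it is one-dimensional, so $t\cdot f=c(t)f$ for a unique scalar $c(t)\in K$. Evaluating at $g=e$ yields
$$
(t\cdot f)(e)=f(t^{-1})=\lambda(t^{-1})^{-1}f(e)=\lambda(t)f(e),
$$
whence $c(t)=\lambda(t)$, which is exactly the claim that $\rT$ acts through $\lambda$.

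To summarize, (2) and (3) are essentially formal consequences of the open-cell decomposition and the defining equivariance of $\rV(\lambda)$; the nontrivial content lies entirely in the existence statement (1), which requires either a rank-one/fundamental-weight construction or reduction to an algebraically closed base field via Lemma~\ref{0:8}-style base change arguments.
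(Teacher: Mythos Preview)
The paper's ``proof'' is simply a reference to Jantzen~\cite{J}, Part~II, 2.2 and 2.6, so there is no in-paper argument to compare against. Your treatment of (2) and (3) via the big cell $\Omega=\rU\cdot\rB^{-}$ is correct and is in fact the argument found in Jantzen~II, 2.2.

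For (1), however, your first suggested route has a genuine gap in the present generality. In this subsection $\rG$ is only assumed split \emph{reductive}, not semisimple simply connected, so the fundamental weights $\varpi_{\alpha}$ need not lie in the character lattice $\rM$, and a dominant $\lambda\in\rM^{+}$ is generally not a nonnegative integer combination $\sum n_{\alpha}\varpi_{\alpha}$ (already $\rG=\rGm$ or $\rG=\rPGL_{2}$ break this). The tensor-product construction therefore does not produce the needed highest weight vector. Your alternative of citing Jantzen is of course valid, but that coincides with what the paper does. If you want an actual argument for (1), the standard one extends your function $f_{\lambda}(u\beta)=\lambda(\beta)^{-1}$ from $\Omega$ to all of $\rG$ by checking regularity across each codimension-one Bruhat stratum $\rB s_{\alpha}\rB^{-}$ via a rank-one computation, where the dominance condition $(\alpha^{\vee},\lambda)\geq 0$ is precisely the needed positivity; alternatively one produces a finite-dimensional module with a $\rB$-eigenvector of weight $\lambda$ and invokes Frobenius reciprocity. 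Finally, your invocation of ``Lemma~\ref{0:8}-style base change'' is off target: that lemma concerns $k[\rT]^{\rW}$, not $\rH^{0}(\rG/\rB^{-},\cL(\lambda))$; the relevant base change for (1) would be flat base change for cohomology of a projective morphism.
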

\begin{proof}
~\cite{J} Part II, 2.2 and 2.6.
\end{proof}
From Proposition~\ref{0:6}, we get the following corollary
immediately:
\begin{coro}\label{0:9}
For each $\lambda$ in $\rM^{+}$, there exists a representation
$$\rho_{\lambda}:\rG\ra\rGL_{n_{\lambda}}$$ such that the character
$\chi_\lambda$ associated to $\rho_\lambda$ restricted to $\rT$
takes the form
$\Sym(\e^{\lambda})+\underset{\underset{\lambda>\mu}{\mu\in\rM^{+},}}{\sum}a_{\mu}\
\Sym(\e^\mu)$.
\end{coro}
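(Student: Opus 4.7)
The plan is to take $\rho_{\lambda}$ to be the representation of $\rG$ on $\rV(\lambda)$ (up to choosing a $K$-basis to present it in $\rGL_{n_{\lambda}}$), where $\rV(\lambda)=\rH^{0}(\rG/\rB^{-},\cL(\lambda))$ carries the left regular $\rG$-action introduced above. Then $n_{\lambda}=\Dim_{K}\rV(\lambda)$ is finite by projectivity of $\rG/\rB^{-}$, and $\chi_{\lambda}$ is well defined. The only thing to check is that when restricted to $\rT$, $\chi_{\lambda}$ has the required shape.

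First, I would decompose $\rV(\lambda)$ into $\rT$-weight spaces $\rV(\lambda)=\bigoplus_{\mu\in\rM}\rV(\lambda)_{\mu}$, so that $$\chi_{\lambda}|_{\rT}=\sum_{\mu\in\rM}(\Dim_{K}\rV(\lambda)_{\mu})\,\e^{\mu}.$$ Because $\chi_{\lambda}|_{\rT}$ is invariant under the Weyl group action on $\rM$ (characters of $\rG$-representations are class functions, and $\rW=\Nor_{\rG}(\rT)/\rT$), we can regroup weights into $\rW$-orbits and write $\chi_{\lambda}|_{\rT}$ as a $\ent_{\geq 0}$-linear combination of symbols $\Sym(\e^{\mu})$ with $\mu\in\rM^{+}$, where each coefficient equals $\Dim_{K}\rV(\lambda)_{\mu}$.

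Next I would isolate the coefficient of $\Sym(\e^{\lambda})$. By Proposition~\ref{0:6}(2)--(3), the $\rU$-fixed subspace $\rV(\lambda)^{\rU}$ is one-dimensional of weight $\lambda$. Since any highest weight vector of a $\rT$-weight $\mu\in\rM^{+}$ in $\rV(\lambda)$ must be $\rU$-fixed, this forces $\Dim_{K}\rV(\lambda)_{\lambda}=1$, and moreover the only dominant weight $\mu\in\rM^{+}$ appearing with $\mu\not<\lambda$ is $\lambda$ itself. Concretely, any dominant weight $\mu$ of $\rV(\lambda)$ satisfies $\mu\leq\lambda$: otherwise one could apply raising operators from the root subgroups in $\rU$ to produce a $\rU$-fixed vector of weight strictly higher than $\lambda$, contradicting Proposition~\ref{0:6}. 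This yields $$\chi_{\lambda}|_{\rT}=\Sym(\e^{\lambda})+\sum_{\substack{\mu\in\rM^{+}\\ \mu<\lambda}}a_{\mu}\,\Sym(\e^{\mu}),$$ as required.

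The main obstacle is the highest weight inequality $\mu\leq\lambda$ for all weights $\mu$ of $\rV(\lambda)$, but this is exactly the classical Borel--Weil / highest weight statement (see e.g.~\cite{J} II, 2.2), which is the substance of Proposition~\ref{0:6}; the rest is bookkeeping via $\rW$-symmetry of characters and the one-dimensionality of $\rV(\lambda)^{\rU}$.
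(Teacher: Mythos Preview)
Your proof is correct and follows essentially the same route as the paper: take $\rho_{\lambda}$ to be the action on $\rV(\lambda)$, decompose into $\rT$-weight spaces, use $\rW$-symmetry of the character to group terms into $\Sym(\e^{\mu})$'s, and then invoke Proposition~\ref{0:6} to see that $\lambda$ is the unique maximal weight and has multiplicity one. The paper's write-up is slightly crisper in one place---it notes directly (citing~\cite{J} II, 1.19(7)) that any \emph{maximal} weight space is $\rU$-fixed, which immediately gives $\mu\leq\lambda$ for all weights $\mu$---whereas your ``raising operator'' sentence is the same argument in different words; just be careful not to conflate ``dominant weight'' with ``maximal weight'' in the intermediate sentence.
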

\begin{proof}
For each $\lambda\in\rM^{+}$, $\rV(\lambda)$ defined above is
nonempty by Proposition~\ref{0:6} (1). We claim that the
$\rV(\lambda)$'s do the job. Let
$\rV(\lambda)=\underset{\mu}{\oplus}\rV(\lambda)_{\mu}$, where $\rT$
acts on $\rV(\lambda)_{\mu}$ by the character $\mu$. Note that $\rW$
permutes the $\mu$'s, so
$\Dim_{K}\rV(\lambda)_{\mu_1}=\Dim_{K}\rV(\lambda)_{\mu_2}$ if
$\mu_1$, $\mu_2$ are in the same $\rW$-orbit. Let $\mu$ be a maximal
weight of $\rV(\lambda)$. Then $\rV(\lambda)_{\mu}$ is fixed by
$\rU$ (ref.~\cite{J} Part II, 1.19 (7)). By Proposition~\ref{0:6}
(3), $\mu=\lambda$. Therefore, $\lambda$ is the unique maximal
weight in $\rV(\lambda)$ and by Proposition~\ref{0:6} (2),
$\Dim_{K}\rV(\lambda)_{\lambda}=1$. Hence the character
$\chi_{\lambda}$ restricted to $\rT$ takes the form
$\Sym(\e^{\lambda})+\underset{\underset{\lambda>\mu}{\mu\in\rM^{+},}}{\sum}a_{\mu}\
\Sym(\e^\mu)$.
\end{proof}

Now we consider a split $\ent$-group $\rG$ equipped with a maximal
$\ent$-torus $\rT$. We keep all the notations defined above. What we
want to show next is that Corollary~\ref{0:9} is also true over
$\ent$. We start with a lemma.
\begin{lemm}\label{0:1}
Let k be a commutative ring and $\rG$ be an affine $k$-group scheme.
Let $\rV$ be a $\rG$-module and $\Delta:\rV\ra k[\rG]\otimes\rV$ be
the corresponding comodule map. Let
$\rV^{\rG}=\{v\in\rV|\Delta(v)=1\otimes v\}$. Then for any flat
k-algebra $\rA$,
$\rV^{\rG}\otimes_{k}\rA=(\rV\otimes_{k}\rA)^{\rG\times_{k}\rA}$ .
\end{lemm}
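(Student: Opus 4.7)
The plan is to present $\rV^{\rG}$ as a kernel and then use flatness to commute the kernel with the base change $-\otimes_{k}\rA$. More precisely, define the $k$-linear map
\[
d:\rV\lra k[\rG]\otimes_{k}\rV,\qquad d(v)=\Delta(v)-1\otimes v,
\]
so that by definition $\rV^{\rG}=\Ker(d)$. Since $\rG$ is affine, base change gives a canonical identification $k[\rG]\otimes_{k}\rA=\rA[\rG\times_{k}\rA]$, and under this identification the comodule map of the $\rG\times_{k}\rA$-module $\rV\otimes_{k}\rA$ is exactly $\Delta\otimes\id_{\rA}$. Hence the analogous difference map for $\rG\times_{k}\rA$ is $d\otimes\id_{\rA}$, and $(\rV\otimes_{k}\rA)^{\rG\times_{k}\rA}=\Ker(d\otimes\id_{\rA})$.

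The result then follows from a single observation: because $\rA$ is flat over $k$, the functor $-\otimes_{k}\rA$ is left exact and therefore commutes with the formation of kernels. Applying this to the exact sequence
\[
0\lra\rV^{\rG}\lra\rV\xrightarrow{\ d\ }k[\rG]\otimes_{k}\rV
\]
yields $\rV^{\rG}\otimes_{k}\rA=\Ker(d\otimes\id_{\rA})=(\rV\otimes_{k}\rA)^{\rG\times_{k}\rA}$, which is exactly the statement.

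There is no real obstacle here; the only point that deserves a sentence of justification is the compatibility of the comodule structure with base change, i.e.\ that the $\rG\times_{k}\rA$-module structure on $\rV\otimes_{k}\rA$ is induced by $\Delta\otimes\id_{\rA}$ via the canonical isomorphism $k[\rG]\otimes_{k}\rA\cong\rA[\rG\times_{k}\rA]$. Everything else is the standard ``flatness commutes with kernels'' argument. Note that flatness is essential: without it, $\rV^{\rG}\otimes_{k}\rA\to(\rV\otimes_{k}\rA)^{\rG\times_{k}\rA}$ is only injective in general, which is why the hypothesis on $\rA$ cannot be dropped.
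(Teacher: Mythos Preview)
Your argument is correct and is exactly the standard ``flatness commutes with kernels'' proof; the paper itself does not spell this out but simply cites Seshadri~\cite{Sesh}, Lemma~2, which contains precisely this reasoning. So there is nothing to add on the substance of the proof.

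One small correction to your closing remark: it is not true that without flatness the natural map $\rV^{\rG}\otimes_{k}\rA\to(\rV\otimes_{k}\rA)^{\rG\times_{k}\rA}$ is still injective in general. Injectivity of this map is equivalent to injectivity of $\rV^{\rG}\otimes_{k}\rA\to\rV\otimes_{k}\rA$, which can fail whenever $\tor_{1}^{k}(\rV/\rV^{\rG},\rA)\neq 0$. For a concrete example, take $k=\ent/4\ent$, $\rG=\rGa=\spec k[t]$, $\rV=k^{2}$ with $\Delta(e_{1})=1\otimes e_{1}$ and $\Delta(e_{2})=2t\otimes e_{1}+1\otimes e_{2}$, and $\rA=k/(2)$; then $2e_{2}\in\rV^{\rG}$ gives a nonzero element of $\rV^{\rG}\otimes_{k}\rA$ that dies in $\rV\otimes_{k}\rA$. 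The example in the paper's Remark following the lemma happens to exhibit a failure of surjectivity with the map still injective, but that is an artifact of working over $\ent$ with free modules (there $\Img(d)$ sits inside a free module and is automatically torsion-free). Your proof of the lemma itself is unaffected; just drop or weaken that last sentence.
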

\begin{proof}
We refer to~\cite{Sesh} Lemma 2.
\end{proof}
\begin{rema}\label{0:2}
When $\rA$ is not a flat $k$-algebra, the above lemma may be false.
For example, let $k=\ent$, $\rA=\ent/p\ent$, where $p$ is an integer
and $\rG=\spec(k[t])$ defined as the additive group. Let
$\rV=k\oplus k$, and $e_1,e_2$ be its standard basis. Define
$\Delta:\rV\ra k[\rG]\otimes\rV$ as $\Delta(e_1)=1\otimes e_1$ and
$\Delta(e_2)=pt\otimes e_1+1\otimes e_2$. Then $\Delta$ is a
comodule map and $\rV^{\rG}\otimes_{k}\rA=\rA e_1$ while
$(\rV\otimes_{k}\rA)^{\rG\times_{k}\rA}=\rV\otimes_{k}\rA$.
\end{rema}

\end{subsection}
Let $\rV(\lambda)$ be the $\rG$-module
$\rH^{0}(\rG_{{\rat}}/\rB^{-}_{{\rat}},\cL(\lambda)_{{\rat}})$
defined above. Then there is a lattice
$\rN(\lambda)\subseteq\rV(\lambda)$ which is a $\rG$-module
(ref.~\cite{Ser}, \S 2.4, Lemme 2).  By Lemma~\ref{0:1}, we have
$$\rN(\lambda)^{\rU}\otimes_{\ent}\rat\simeq(\rN(\lambda)\otimes_{\ent}{\rat})^{\rU_{{\rat}}}.$$
By Proposition~\ref{0:6},
$\Dim_{{\rat}}(\rN(\lambda)\otimes_{\ent}{\rat})^{\rU_{{\rat}}}=\Dim_{{\rat}}(\rV(\lambda))^{\rU_{{\rat}}}=1$,
so the $\ent$-module $\rN(\lambda)^{\rU}$ is free of rank one and
$\rT$ acts on it by $\lambda$. Hence $\lambda$ is the unique maximal
weight of $\rN(\lambda)$ and
$\rN(\lambda)_{\lambda}=\rN(\lambda)^{\rU}$. Now we get all the
ingredients at hand to prove Theorem~\ref{1:1} over $\ent$.
\begin{proof}[Proof of Theorem~\ref{1:1}]
The injectivity part is the easy one. Note that for a split
reductive $\ent$-group, $\ent[\rG]$ is a free $\ent$-module, so
$\ent[\rG]^{\rG}$ is torsion free and is flat over $\ent$. Therefore
the map
$\ent[\rG]^{\rG}\hookrightarrow\ent[\rG]^{\rG}\otimes_{\ent}\ratc$
is injective. By Lemma~\ref{0:1},
$\ent[\rG]^{\rG}\otimes_{\ent}\ratc=(\ent[\rG]\otimes_{\ent}\ratc)^{\rG_{\ratc}}$.
Now the injectivity follows from Theorem~\ref{0:3}.

For the surjectivity, let $\lambda$ be a dominant character of $\rT$
with respect to the Borel subgroup $\rB$ and $\rho_{\lambda}$ be the
homomorphism $\rho_{\lambda}:\rG\ra\rGL(\rN(\lambda))$ defined as
above. Let $\chi_{\lambda}$ be the character of $\rho_{\lambda}$.
Since $\rN(\lambda)_{\lambda}$ is free of rank one, $\chi_{\lambda}$
restricted to $\rT$ will take the form
$\Sym(\e^\lambda)+\underset{\mu<\lambda,\mu\in\rM^{+}}{\Sigma}n_{\mu}\cdot\Sym(\e^\mu)$,
where $n_{\mu}$ is the rank of $\rN(\lambda)_{\mu}$. By
Lemma~\ref{0:4} and Lemma~\ref{0:5}, the $\chi_{\lambda}$'s
restricted to $\rT$ form a basis of $\ent[\rT]^{\rW}$ and the
surjectivity follows.

\end{proof}

Let $\lambda$ in $\rM^{+}$  and define \emph{the character
associated to the weight $\lambda$} to be  $\chi_\lambda$ as in the
above proof. Then we have the following Corollary:
\begin{coro}~\label{1:6}
Let $\rG$, $\rT$ and $\rW$ be defined as in Theorem~\ref{1:1}. In
case $\rG$ is semisimple simply connected, $\ent[\rG]^{\rG}$ is
freely generated as a commutative $\ent$-algebra by the characters
associated to the fundamental weights. Especially, we have
$\adqG\simeq\aff^{n}_{\ent}$, where $n$ is the rank of $\rG$.
\end{coro}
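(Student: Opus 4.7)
The plan is to combine Theorem~\ref{1:1} with the triangular description of $\chi_\lambda|_{\rT}$ established in its proof, exploiting the fact that when $\rG$ is semisimple simply connected the character lattice $\rM$ of $\rT$ is the full weight lattice and the monoid $\rM^{+}$ is freely generated by the fundamental weights $\omega_1,\dots,\omega_n$. By Theorem~\ref{1:1} it suffices to show that $\ent[\rT]^{\rW}$ is a polynomial $\ent$-algebra on $\chi_{\omega_1},\dots,\chi_{\omega_n}$.

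First I would recall from the proof of Theorem~\ref{1:1} that
$$\chi_{\omega_i}|_{\rT} = \Sym(\e^{\omega_i}) + \sum_{\mu<\omega_i,\ \mu\in\rM^{+}} n_{\mu}\,\Sym(\e^{\mu}).$$
For a multi-index $(a_1,\dots,a_n)\in\ent_{\geq 0}^{\,n}$ and $\lambda=\sum a_i\omega_i\in\rM^{+}$, I would expand $\prod_i \chi_{\omega_i}^{a_i}|_{\rT}$ and check that it takes the triangular form $\Sym(\e^{\lambda}) + \sum_{\mu<\lambda,\ \mu\in\rM^{+}} c_{\mu}\,\Sym(\e^{\mu})$. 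The key observation is that for dominant $\lambda_1,\lambda_2$ the product $\Sym(\e^{\lambda_1})\,\Sym(\e^{\lambda_2}) = \sum_{(w_1,w_2)} \e^{w_1\lambda_1+w_2\lambda_2}$ has the unique maximal weight $\lambda_1+\lambda_2$, since $w_i\lambda_i\leq\lambda_i$ for each $i$, and this maximal weight appears with coefficient $1$ (the only contributing pair is the class of the identity in each quotient $\rW/\rW_{\lambda_i}$); iterating handles the general monomial.

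I would then apply Lemma~\ref{0:4} to the free $\ent$-basis $\{\Sym(\e^{\lambda})\}_{\lambda\in\rM^{+}}$ of $\ent[\rT]^{\rW}$ furnished by Lemma~\ref{0:8}, with $\rM^{+}$ equipped with the partial order induced from the simple roots and reindexed via the bijection $(a_1,\dots,a_n)\leftrightarrow\sum a_i\omega_i$. The finiteness of coefficients is automatic, and condition (Min) follows from Lemma~\ref{0:5}: any nonempty $S\subseteq\rM^{+}$ contains some $\lambda_0$, and $S\cap\rI(\lambda_0)$ is finite and nonempty, yielding a minimal element that is automatically minimal in $S$. Consequently the monomials $\prod_i \chi_{\omega_i}^{a_i}$ form an $\ent$-basis of $\ent[\rT]^{\rW}$; their $\ent$-linear independence is exactly the statement that $\ent[\rG]^{\rG}\simeq\ent[\chi_{\omega_1},\dots,\chi_{\omega_n}]$ is a polynomial ring, whence $\adqG\simeq\aff^{n}_{\ent}$.

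The main obstacle I anticipate is the triangularity of the monomial expansion: one must check that expanding $\prod_i(\Sym(\e^{\omega_i}) + \text{lower})^{a_i}$ never produces a term whose highest dominant weight exceeds $\lambda=\sum a_i\omega_i$, and that the top term $\Sym(\e^{\lambda})$ appears with coefficient exactly $1$. This rests on the facts that the dominance order is preserved under addition of dominant weights and that, as noted above, the leading $\rW$-orbit sum in a product of symmetrizations of dominant weights is unambiguous. Once this is verified, the rest is bookkeeping via Lemmas~\ref{0:4}, \ref{0:5} and~\ref{0:8}.
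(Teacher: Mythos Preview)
Your proposal is correct and follows essentially the same route as the paper: reduce via Theorem~\ref{1:1} to $\ent[\rT]^{\rW}$, use the triangular shape $\prod_i\chi_{\omega_i}^{a_i}=\Sym(\e^{\lambda})+\text{lower}$ (with leading coefficient $1$), and invoke Lemmas~\ref{0:4}, \ref{0:5}, \ref{0:8} to conclude that the monomials in the $\chi_{\omega_i}$ form a $\ent$-basis. Your explicit justification of the leading-term behavior in products of $\Sym$'s and of the (Min) condition are a bit more detailed than the paper's, but the argument is the same.
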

\begin{proof}
Let $\{\lambda_1,...,\lambda_n\}$ be the fundamental weights of
$\rT$, and $\chi_i$'s be the characters associated to $\lambda_i$'s
respectively. Since $\rG$ is semisimple simply connected, the
fundamental highest weights generate $\rM^{+}$. From the proof of
Theorem~\ref{1:1}, we know that $\lambda:=\sum_i m_i\lambda_i$ is
the unique maximal weight occurring in
$\underset{i=1,...n}{\Pi}\chi_i^{m_i}$ and the monomial
$\e^{\lambda}$ is with coefficient 1. Since $\chi_i$'s are in
$\ent[\rT]^{\rW}$, $\underset{i=1,...n}{\Pi}\chi_i^{m_i}$ takes the
form
$\Sym(\e^{\lambda})+\underset{\mu<\lambda}{\sum}c_\mu\Sym(\e^{\mu})$.
By Lemma~\ref{0:4} and Lemma~\ref{0:5},
$\{\underset{i=1,...n}{\Pi}\chi_i^{m_i}\}_{m_i\in\ent_{\geq0}}$
restricted to $\rT$ form a basis of $\ent[\rT]^{\rW}$ and hence
$\ent[\rG]^{\rG}\simeq\ent[\rT]^{\rT}\simeq\ent[\chi_1,...,\chi_n]$.
The rest of the Corollary follows.

\end{proof}
\end{section}
%%%%%%%%%%%%%%%%%%%%%%%%%%%%%%%%%%%%%%%%%%%%%%%%%%%%%%%%%%%%%%%%%%%%%%%%%%%%%%%%%%%
\begin{section}{Stability under base change}
Let $k$ be a commutative ring and $\rG$ be a reductive $k$-group
with a maximal torus $\rT$. Let $\rW$ be the corresponding Weyl
group of $\rT$. In the previous section, we have proved that
$\adqG\simeq\rT//\rW$ when $k=\ent$. Here, we want to show that this
result holds over an arbitrary commutative ring $k$. To be more
precise, we have the following theorem:
\begin{theo}\label{2.1}
Let $k$ be a commutative ring and $\rG$ be a reductive $k$-group.
Then:
\begin{itemize}
\item [(1)]
For any $k$-algebra $\rA$,
$\rG_{\rA}//\rG_{\rA}\xrightarrow{\sim}({\adqG})_{\rA}$.
\item [(2)] Suppose that $\rG$ has a maximal torus $\rT$. Let $\rW$ be the corresponding Weyl
group of $\rT$. Then $\rT//\rW\xrightarrow{\sim}\adqG$.
\end{itemize}

\end{theo}

Instead of proving Theorem~\ref{2.1} directly, we prove the
following special case first:
\begin{lemm}\label{2.2}
Let $\rA$ be a commutative ring. Let $\rG$ be a split reductive
group over $\ent$ and $\rT$ be a maximal torus of $\rG$. Let $\rW$
be the Weyl group with respect to $\rT$. Then
$\rG_{\rA}//\rG_{\rA}\xrightarrow{\sim}(\adqG)_\rA$ and $
{\rT_\rA//\rW_\rA}\xrightarrow{\sim}\rG_{\rA}//\rG_{\rA}$.
\end{lemm}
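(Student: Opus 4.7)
The plan is to establish the first isomorphism $\alpha: \ent[\rG]^{\rG} \otimes_{\ent} \rA \xrightarrow{\sim} \rA[\rG_{\rA}]^{\rG_{\rA}}$; the second then follows at once, since Lemma~\ref{0:8} and Theorem~\ref{1:1} together identify $\rA[\rT_{\rA}]^{\rW_{\rA}}$ with $\ent[\rT]^{\rW} \otimes \rA = \ent[\rG]^{\rG} \otimes \rA$, making the restriction map $\rA[\rG_\rA]^{\rG_\rA} \to \rA[\rT_\rA]^{\rW_\rA}$ inverse to $\alpha$ composed with this identification.

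Write $\psi: \ent[\rG] \to \ent[\rG] \otimes \ent[\rG]$ for the $\ent$-linear map $\psi(f) = \Delta(f) - 1 \otimes f$ coming from the conjugation comodule map $\Delta$, so that $\Ker\psi = \ent[\rG]^{\rG}$. The image $\Img\psi$ is a subgroup of the free $\ent$-module $\ent[\rG] \otimes \ent[\rG]$, hence is itself free and in particular flat. It follows that the short exact sequence $0 \to \ent[\rG]^{\rG} \to \ent[\rG] \to \Img\psi \to 0$ remains exact after base change to any $\rA$, which yields the injectivity of $\alpha$ and identifies its image with $\Ker(\rA[\rG_\rA] \to \Img\psi \otimes \rA)$. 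Surjectivity of $\alpha$ reduces to showing that $\Img\psi \otimes \rA \hookrightarrow (\ent[\rG] \otimes \ent[\rG]) \otimes \rA$ stays injective, equivalently that the cokernel $(\ent[\rG] \otimes \ent[\rG])/\Img\psi$ is torsion-free over $\ent$; then $\Ker\psi_\rA$ would coincide with this kernel, giving the result.

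I would deduce this torsion-freeness from the validity of the lemma for $\rA = \ent/n$. Given $nx = \psi(f)$, decompose $f = nf_{0} + r$ with the coefficients of $r$ reduced modulo $n$; then $\bar r \in (\ent/n)[\rG]$ is $\rG_{\ent/n}$-invariant, so by the lemma for $\ent/n$ it lifts to some $h \in \ent[\rG]^{\rG}$, and one concludes $x = \psi(f_{0} + (r-h)/n) \in \Img\psi$. To establish the lemma for $\ent/n$ itself, the case $\rA = \mathbb{F}_{p}$ follows from Theorem~\ref{0:3} via descent from $\overline{\mathbb{F}_{p}}$, using Lemma~\ref{0:1} applied to the flat extension $\mathbb{F}_{p} \hookrightarrow \overline{\mathbb{F}_{p}}$ together with Lemma~\ref{0:8}. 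The case $\rA = \ent/p^{a}$ is then obtained by induction on $a$ via a five-lemma argument: tensor the sequence $0 \to \mathbb{F}_{p} \to \ent/p^{a} \to \ent/p^{a-1} \to 0$ with $\ent[\rG]^{\rG}$ on top (exact since $\ent[\rG]^{\rG}$ is free over $\ent$) and with $\ent[\rG]$ on the bottom, then take $\rG$-invariants of the latter; surjectivity of the rightmost bottom map is automatic because its composition through the top row is already surjective, so one gets a short exact sequence below, and the induction hypothesis together with the base case identify the outer verticals as isomorphisms. The final case $\ent/n$ is handled by the Chinese Remainder Theorem.

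The main obstacle is the inductive step for $\rA = \ent/p^{a}$: it depends essentially on both the freeness of $\ent[\rG]^{\rG}$ over $\ent$ furnished by Theorem~\ref{1:1} and on the base case for $\mathbb{F}_{p}$, and it is here that the cohomological control over the nilpotent filtration is set up.
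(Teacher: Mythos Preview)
Your approach is essentially the paper's: both set up the same two short exact sequences, note that $\Img\psi$ is flat over $\ent$, and reduce to showing the cokernel $\rQ=(\ent[\rG]\otimes\ent[\rG])/\Img\psi$ is torsion-free by establishing the lemma over residue rings via the commutative square built from Theorem~\ref{1:1}, Lemma~\ref{0:8}, and Theorem~\ref{0:3}. The paper, however, checks only $\rA=\ent/p\ent$ for each prime $p$, which already yields $\tor_1^{\ent}(\ent/p\ent,\rQ)=0$ and hence flatness of $\rQ$ directly; your inductive passage to $\ent/p^{a}$ and the Chinese Remainder step are correct but superfluous, since torsion-freeness over $\ent$ is detected by primes alone.
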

\begin{proof}
As we have shown in Theorem~\ref{1:1}, $\rT//\rW\simeq\adqG$ and
hence $(\rT//\rW)_\rA\simeq(\adqG)_\rA$. On the other hand, from
Lemma~\ref{0:8}, we have $(\rT_\rA//\rW_\rA)\simeq(\rT//\rW)_\rA$.
Therefore,
 the isomorphism between ${\rT_\rA//\rW_\rA}$ and $\rG_{\rA}//\rG_{\rA}$ follows from the
 isomorphism between $\rG_{\rA}//\rG_{\rA}$ and $(\adqG)_\rA$. So it is enough to prove $\rG_{\rA}//\rG_{\rA}\xrightarrow{\sim}(\adqG)_\rA$.

 Let $c$ be
the comodule map corresponding to the $\rG$-conjugation action on
$\ent[\rG]$. Define the map $\gamma$ as $\gamma(f)=c(f)-1\otimes f$.
Then we have the following exact sequences:
\begin{center} $\xymatrix {0\ar[r]&
\ent[\rG]^{\rG}\ar[r]& \ent[\rG]\ar[r]^{\gamma}&
 \Img(\gamma)\ar[r]& 0}$,
 $\xymatrix {0\ar[r]&
\Img(\gamma)\ar[r]& \ent[\rG]\otimes_\ent\ent[\rG]\ar[r]&
 \rQ\ar[r]& 0}$.
\end{center}

Since $\ent[\rG]$ is a free $\ent$-module and $\Img(\gamma)$ is a
submodule of $\ent[\rG]\otimes_\ent\ent[\rG]$, $\Img(\gamma)$ is
torsion-free and hence a flat $\ent$-module. Therefore, for an
arbitrary commutative ring $\rA$, we have the following exact
sequence
\begin{center}
$\xymatrix {0=\tor_\ent\ar[r](\rA,\Img(\gamma))&
\rA\otimes\ent[\rG]^{\rG}\ar[r]& \rA[\rG]\ar[r]^{\gamma_{\rA}}&
\rA\otimes\Img(\gamma)\ar[r]& 0 }$.
\end{center}
If we can prove that $\rQ$ is also flat, then the multiplication map
$$\Img(\gamma)\otimes_\ent\rA\ra\rA[\rG]\otimes_\rA\rA[\rG]$$ will be
injective, which in turn means that
$\rA\otimes\ent[\rG]^{\rG}=\rA[\rG]^{\rG}$.

Now take a prime $p$ and consider the following diagram:
\begin{center}
$\xymatrix{
\ent/p\ent\underset{\ent}{\otimes}\ent[\rG]^{\rG}\ar[rr]^{\underset{\sim}{id_p\otimes
i}}\ar[d]_{m} && \ent/p\ent\underset{\ent}{\otimes}\ent[\rT]^{\rW}
\ar[d]^{\wr}_{m} \\
\ent/p\ent[\rG]^{\rG}\ar[rr]^{i_{p}} && \ent/p\ent[\rT]^{\rW}.}$
\end{center}
By Lemma~\ref{0:8}, we have
$$\ent/p\ent[\rT]^{\rW}\simeq\ent/p\ent\otimes_\ent\ent[\rT]^{\rW};$$
by Theorem~\ref{1:1}, we have
$$\ent[\rG]^{\rG}\simeq\ent[\rT]^{\rW};$$
and by Theorem~\ref{0:3}, $i_p$ is an isomorphism. Therefore,
$$\ent/p\ent\underset{\ent}{\otimes}\ent[\rG]^{\rG}\simeq\ent/p\ent[\rG]^{\rG},$$
which implies
$\Img(\gamma)\otimes_\ent\ent/p\ent\ra\ent/p\ent[\rG]\otimes_{\ent/p\ent}\ent/p\ent[\rG]$
is an injection and we get that $\tor_\ent(\ent/p\ent,\rQ)=0$ for
any prime $p$. Hence, $\rQ$ is a flat $\ent$-module and
$\rA\otimes\ent[\rG]^{\rG}=\rA[\rG]^{\rG}$.

\end{proof}

Since every reductive $k$-group is \'etale locally split
(ref.~\cite{SGA3}, Exp. XXII, Cor. 2.3), we can deduce
Theorem~\ref{2.1} from the above lemma.
\begin{proof}[Proof of Theorem~\ref{2.1}]
We prove (1) first. We have a natural morphism
$$m^{a}:\rG_{\rA}//\rG_{\rA}\ra(\rG//\rG)_{\rA}$$ corresponding to the
multiplication $m: \rA\otimes_k k[\rG]^{\rG}\ra \rA[\rG]^{\rG}$.

If we can find an \'{e}tale covering
$\{\spec(\rA_n)\}_n\ra\spec(\rA)$ such that $m^a\times_\rA \rA_n$ is
an isomorphism, then $m^a$ is an isomorphism (~\cite{DG} Chap. III,
\S1, 2.6). Since the reductive group $\rG$  splits \'etale locally
(ref.~\cite{SGA3}, Exp. XXII, Cor. 2.3), we can find an \'{e}tale
covering $\{\spec(k_n)\}_{n\in\rI}\ra\spec(k)$ such that $\rG_{k_n}$
is split with respect to a split maximal torus $\rT_n$ for each
$n\in\rI$. Then over $k_n$, there exists a split reductive
$\ent$-group $\rG_0$ such that $\rG_{k_n}\simeq\rG_{0,k_n}$
(~\cite{SGA3} Exp. XXV, Cor. 1.2 and Exp. XXIII, Cor. 5.10). Let
$\rA_n=\rA\otimes_k k_n$. We want to prove that $m^a\times_\rA
\rA_n:(\rG_{\rA}//\rG_{\rA})_{\rA_n}\ra(\rG//\rG)_{\rA_n}$ is an
isomorphism. Since $k_n$ is flat over $k$, by Lemma~\ref{0:1}, we
have
$$(\rG//\rG)\times_k k_n=\rG_{k_n}//\rG_{k_n},$$ and
$$(\rG_\rA//\rG_\rA)\times_\rA\rA_n=\rG_{\rA_n}//\rG_{\rA_n}.$$
Since $\rG_{k_n}\simeq\rG_{0,k_n}$, we have
$\rG_{A_n}\simeq\rG_{0,\rA_n}$. By Lemma~\ref{2.2}, we have
\begin{align*}
(\rG_\rA//\rG_\rA)\times_\rA\rA_n&\simeq \rG_{\rA_n}//\rG_{\rA_n}\\
&\simeq\rG_{0,A_n}//\rG_{0,A_n}\\
&\simeq(\rG_{0}//\rG_{0})_{\rA_n}\\
&\simeq(\rG_{0}//\rG_{0})_{k_n}\times_{k_n}{\rA_n}\\
&\simeq(\rG_{0,k_n}//\rG_{0,k_n})\times_{k_n}{\rA_n}\\
&\simeq(\rG_{k_n}//\rG_{k_n})\times_{k_n}{\rA_n}.
\end{align*}
On the other hand, we have
$$(\adqG)_{\rA_n}\simeq(\rG//\rG)_{k_n}\times_{k_n}{\rA_n}=(\rG_{k_n}//\rG_{k_n})\times_{k_n}{\rA_n}.$$
Therefore, $m^{a}\times_A A_n$ is an isomorphism and (1) follows.

We prove (2) now. As we have mentioned in the introduction, there is
a natural morphism $\iota:\rT//\rW\ra\adqG$ corresponding to the
restriction map $i:k[\rG]^{\rG}\ra k[\rT]^{\rW}$. As in the proof of
(1), to verify that $\iota$ is an isomorphism, it is enough to prove
that there exists an \'{e}tale covering
$\{\rU_n\}_{n\in\rI}\ra\spec(k)$ such that $\iota\times\rU_n$ is an
isomorphism for all $n\in\rI$.
 Since the reductive group
$\rG$  splits \'etale locally with respect to $\rT$
(ref~\cite{SGA3}, Exp. XXII, Cor. 2.3), we can find an \'etale
covering $\{\spec(k_n)\}_{n\in\rI}\ra\spec(k)$ such that $\rG_{k_n}$
splits with respect to $\rT_{k_n}$ for each $n\in\rI$. Then there
exists a split reductive $\ent$-group $\rG_0$ such that
$\rG_{k_n}\simeq\rG_{0,k_n}$ (~\cite{SGA3} Exp. XXV, Cor. 1.2 and
Exp. XXIII, Cor. 5.10). Then over $k_n$, by Lemma~\ref{2.2}, we have
$$(\ast)\ \rT_{k_n}//\rW_{k_n}\simeq\rG_{k_n}//\rG_{k_n}.$$
Since $k_n$ is flat over $k$, from $(\ast)$, we get
$(\rT//\rW)_{k_n}\simeq(\rG//\rG)_{k_n}.$ Therefore, $\iota$ is an
isomorphism over $k_n$ for each $n$ and hence an isomorphism.

\end{proof}
%\begin{remark}
%\end{remark}
\end{section}
%%%%%%%%%%%%%%%%%%%%%%%%%%%%%%%%%%%%%%%%%%%%%%%%%%%%%%%%%%%%%%%%%%%%%%%%%%%%%%%%%%%%

\begin{section}{Generalized Steinberg's cross-section}

In this section, we let $\rG$ be a semisimple simply-connected group
over a commutative ring $k$. Moreover, we assume that $\rG$ is of
constant type, i.e., there exists a split semisimple group $\rG_0$
over $\ent$ such that $\rG$ is \'{e}tale locally isomorphic to
$\rG_{0,k}$. In this case, $\Isom(\rG_{0,k},\rG)$ is a right
$\underline{\Aut}(\rG_{0,k})$-torsor, and $\rG$ is a form of
$\rG_{0,k}$ twisted by a right $\underline{\Aut}(\rG_{0,k})$-torsor
$\Isom(\rG_{0,k},\rG)$.

Here we want to discuss the adjoint quotient $\adqG$ in this special
case and show that the Steinberg's cross-section can be defined over
arbitrary $k$ in this case.

\begin{subsection}{Adjoint quotients of semisimple simply connected groups}
As we have mentioned in Corollary~\ref{1:6}, if $\rG$ splits, then
the adjoint quotient of $\rG$ is isomorphic to the affine space
$\aff^{n}_{k}$, where $n$ is the rank of $\rG$. In general, since
$\rG$ is locally split in \'{e}tale topology, $\adqG$ is a $k$-form
of $\aff^{n}_{k}$. In the following, we want to show:
\begin{prop}~\label{3:1}
Let k be an arbitrary commutative ring. Let $\rG$ be as in the
beginning of this section. Then $\adqG$ is isomorphic to
$\underset{\rD/\spec k}{\prod}(\aff^{1}_{\rD})$, where $\rD$ is the
Dynkin scheme of $\rG$ (~\cite{SGA3}, Exp. XXIV, 3.2, 3.3), and
$\prod$ stands for the Weil restriction (~\cite{DG}, Chap. I, \S 1,
6.6).
\end{prop}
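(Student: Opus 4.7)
The plan is to reduce to the split case over $\ent$ via Corollary~\ref{1:6} and then to descend along the $\underline{\Aut}(\rG_{0,k})$-torsor $\Isom(\rG_{0,k},\rG)$ that defines $\rG$. For the split group $\rG_0$ over $\ent$, Corollary~\ref{1:6} gives $\rG_0//\rG_0\simeq\spec\ent[\chi_1,\ldots,\chi_n]\simeq\aff^n_\ent$, where the $\chi_i$ are the characters of the fundamental representations. Crucially, these characters are naturally indexed by the vertices of the Dynkin diagram of $\rG_0$, that is, by the geometric points of the constant Dynkin scheme $\rD_0=\bigsqcup_{i=1}^n\spec\ent$. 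Under this indexing the identification reads $\rG_0//\rG_0\simeq\prod_{\rD_0/\ent}\aff^1_{\rD_0}$, the Weil restriction being simply the product of $n$ copies of $\aff^1_\ent$ in this constant case.

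The heart of the argument is to verify that this isomorphism is $\underline{\Aut}(\rG_0)$-equivariant. On the left-hand side, inner automorphisms preserve every conjugacy class and so act trivially on $\rG_0//\rG_0$; the action therefore factors through $\Out(\rG_0)$. An outer automorphism permutes the simple roots, and hence the fundamental weights $\lambda_i$, via its action on the Dynkin diagram, and therefore permutes the fundamental representations $\rV(\lambda_i)$ and their characters $\chi_i$ by exactly the same permutation. On the right-hand side, $\underline{\Aut}(\rG_0)$ acts on $\prod_{\rD_0/\ent}\aff^1_{\rD_0}$ by functoriality of the Weil restriction applied to its action on $\rD_0$, which is again the Dynkin-diagram permutation. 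Thus both actions on $\aff^n_\ent$ coincide, and the displayed isomorphism is equivariant.

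Finally I would twist by $\Isom(\rG_{0,k},\rG)$. Theorem~\ref{2.1} guarantees that the formation of the adjoint quotient commutes with arbitrary base change and is \'etale-local on $k$, so the twist of $(\rG_0//\rG_0)_{k}$ along this torsor is canonically $\adqG$. On the other side, since the Weil restriction along the finite \'etale morphism $\rD_0\to\spec\ent$ commutes with base change, the twist of $(\prod_{\rD_0/\ent}\aff^1_{\rD_0})_k$ along the same torsor is $\prod_{\rD/k}\aff^1_\rD$, with $\rD$ the twist of $\rD_{0,k}$, that is, the Dynkin scheme of $\rG$. The equivariant isomorphism of the previous paragraph therefore descends to the desired $k$-isomorphism. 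The main obstacle is precisely the equivariance check: one must track carefully how outer automorphisms act on the generators $\chi_i$ and match this to their action on $\rD_0$; once this is verified, the descent step is routine given Theorem~\ref{2.1}.
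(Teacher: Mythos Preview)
Your proposal is correct and follows essentially the same approach as the paper's proof: both identify $\rG_0//\rG_0$ with $\aff^n$ via the fundamental characters, observe that inner automorphisms act trivially so that the $\underline{\Aut}(\rG_{0,k})$-action on the quotient factors through $\underline{\Out}(\rG_{0,k})$, verify that this outer action permutes the $\chi_i$ exactly as it permutes the vertices of the Dynkin diagram, and then twist. The only cosmetic difference is that the paper first passes to the induced $\underline{\Out}(\rG_{0,k})$-torsor $\Isomext(\rG_{0,k},\rG)\simeq\Isom(\Dyn(\rG_{0,k}),\Dyn(\rG))$ (noting that this lets one assume $\rG$ quasi-split), whereas you twist directly by the full $\underline{\Aut}(\rG_{0,k})$-torsor; since the action factors through $\underline{\Out}$, these give the same answer.
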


Before we prove Proposition~\ref{3:1}, we recall some facts about
split semisimple groups.

Let $\rT_0$ be a maximal torus in $\rG_0$, ($\rM_0$, $\rM_0^{\vee}$,
$\rR_0$, $\rR_0^{\vee}$) be the root datum with respect to $\rT_0$
and $\Pi_0$ be a fixed base of $\rR_0$. Let us fix a pinning $\rE_0$
of $\rG_0$ with respect to the chosen base $\Pi_0$ (\cite{SGA3},
Exp. XXIV, 1.0). Let $\underline{\Cent}(\rG_{0,k})$ be the center of
$\rG_{0,k}$ and $\ad(\rG_{0,k})$ be the adjoint group associated to
$\rG_{0,k}$ which is defined by
$\rG_{0,k}/\underline{\Cent}(\rG_{0,k})$. Then we have the following
exact sequence of group schemes which splits (\cite{SGA3}, Exp.
XXIV, Thm. 1.3):
\begin{center}
$(\ast)$ $\xymatrix {1\ar[r]& \ad(\rG_{0,k})\ar[r]&
\underline{\Aut}_{k-grp}(\rG_{0,k})\ar[r]&
 \underline{\Out}(\rG_{0,k})\ar[r]& 1}$.
 \end{center}
Moreover, we can choose a splitting
$s:\underline{\Out}({\rG_{0,k}})\ra\underline{\Aut}{\rG_{0,k}}$ of
$(\ast)$ with respect to $\rE_0$, i.e.,
$\underline{\Out}{\rG_{0,k}}$ acts on $\rE_0$ through $s$. We denote
the image of $s$ as $\underline{\Aut}({\rG_{0,k},\rE_0})$. The
splitting $s$ also allows us to regard
$\underline{\Out}({\rG_{0,k}})$ as the automorphism group of the
Dynkin scheme of $\rG_{0,k}$, because $\rG_{0,k}$ is simply
connected (\cite{SGA3}, Exp. XXIV, 3.6).

%Then $\Dyn(\rG_{0,k})\simeq \underset{\alpha\in\Pi_0}{\coprod}{\spec( k)_{\alpha}}$, where $\spec( k)_{\alpha}=\spec( k)$.
Let $\lambda_i$ be the fundamental weight corresponding to the
coroot $\alpha_i^{\vee}\in\Pi_0^{\vee}$ and $\chi_{i}$ be the
character of the fundamental representation associated to the
fundamental weight $\lambda_i$. Let $\Lambda_0$ be the set of
fundamental weights.
%By Corollary~\ref{1:6}, we have an isomorphism $j:\rG_0//\rG_0\ra{(\aff^1_k)}^{n}$ where we map $\rG_0//\rG_0$ to the
%$i$-th coordinate by $\chi_{\lambda_i}$.
Note that $\underline{\Out}{\rG_{0,k}}$ also acts on $\Lambda_0$
through s.

\begin{proof} [Proof of Proposition~\ref{3:1}]
Note that the $\underline{\Aut}_{k-grp}(\rG_{0,k})$-action on
$\rG_0$ induces an $\underline{\Aut}_{k-grp}(\rG_{0,k})$-action on
$k[\rG_{0}]$. Namely, for a $k$-algebra $\rA$,
$\sigma\in{\Aut}_{\rA-grp}(\rG_{0,\rA}),$ $f\in\rA[\rG_{0}]$, we
have $\sigma f\in\rA[\rG_{0}]$ defined as $(\sigma
f)(g)=f(\sigma^{-1}(g))$, for all $g\in\rG_{0,\rA}(\rA')$, for all
$\rA$-algebra $\rA'$. Then by the definition of
$k[\rG_{0}]^{\rG_{0}}$,  $\ad(\rG_{0,k})$ acts on
$k[\rG_{0}]^{\rG_{0}}$ trivially. Therefore,
$\underline{\Aut}_{k-grp}(\rG_{0,k})$ acts on $k[\rG_{0}]^{\rG_{0}}$
through $\underline{\Out}(\rG_{0,k})$, and $\adqG$ is just a form of
$\rG_{0,k}//\rG_{0,k}$ twisted by a right
$\underline{\Out}(\rG_{0,k})$-torsor $\Isomext(\rG_{0,k},\rG)$,
which is isomorphic to $\Isom(\Dyn(\rG_{0,k}),\Dyn(\rG))$ in our
case (~\cite{SGA3}, Exp. XXIV, 3.6). Therefore, we can assume $\rG$
is quasi-split.

%and hence permutes the fundamental weights and the characters of fundamental representations. To be precise,
%let $\lambda_i$ be the fundamental weight corresponding to the coroot $\alpha^{\vee}_i$ and
Let $j:\Lambda_0\ra\Pi_0$ be the map between sets defined by
$j(\lambda_i)=\alpha_i$. Then $j$ is compatible with the
$\underline{\Out}{\rG_{0,k}}$ action on $\Lambda_0$ and $\Pi_0$,
i.e., $\sigma\lambda_i$ is the fundamental weight corresponding to
the coroot $(\sigma\alpha_i)^{\vee}$. If we regard $\sigma$ as an
element of the symmetric group which permutes $\{1,...,n\}$ and maps
$\sigma\alpha_i$ to $\alpha_{\sigma(i)}$, then
$\sigma\chi_{_i}=\chi_{\sigma({i})}$. Therefore
$\underline{\Out}{\rG_{0,k}}$ acts on $k[\rG_{0}]^{\rG_{0}}\simeq
k[\chi_1,...,\chi_n]$ by permuting the parameters, where $\chi_i$'s
are characters of fundamental representations.

Let $\rS$ be the $\underline{\Out}_{k-grp}(\rG_{0,k})$-torsor
corresponding to
$[\xi]\in\rH^{1}_{et}(k,\underline{\Out}_{k-grp}(\rG_{0,k}))$. Then
$$(\adqG)_{\rS}\simeq(\rG_{0,k}//\rG_{0,k})_{\rS}=(\aff^{1}_\rS)^{\Lambda_0}.
$$ Since $\underline{\Out}_{k-grp}(\rG_{0,k})$ acts on
$(\aff^{1}_\rS)^{\Lambda_0}$ by permuting $\Lambda_0$, and
$\Lambda_0$ and $\Pi_0$ are isomorphic as
$\underline{\Out}_{k-grp}(\rG_{0,k})$-set, we have that
$(\adqG)\simeq\underset{\rD/\spec k}{\prod}\aff^{1}_\rD$.
\end{proof}

For a scheme $\rS$, let $\cO_\rS$ be the structure sheaf of $\rS$,
and $\cE$ be an $\cO_\rS$-module. We define two functors $\cV(\cE)$
and $\cW(\cE)$ over $\rS$-schemes as the following:
\begin{center}
$\cV(\cE)(\rT):=\Hom_{\cO_\rS}(\cE,\cO_\rT)$;\\
$\cW(\cE)(\rT):=\Gamma(\rT,\cE\otimes_{\cO_\rS}\cO_\rT)$,
\end{center}
where $\cE\otimes_{\cO_\rS}\cO_\rT$ means the inverse image of $\cE$
in $\rT$ and $$\Gamma(\rT,\cE\otimes_{\cO_\rS}\cO_\rT)$$ means the
global sections of  $\cE\otimes_{\cO_\rS}\cO_\rT$ over $\rT$. Note
that if $\cE$ is a locally free coherent $\cO_\rS$-module, then
$\cV(\cE)\simeq\cW(\cE^{\vee})$.

With the above notation, we have the following Corollary:
\begin{coro}
Let $k$ be a semi-local ring and $\rG$ be defined as in
Proposition~\ref{3:1}. Then $\adqG\simeq\aff^{n}_k$, where $n$ is
the rank of $\rG$.
\end{coro}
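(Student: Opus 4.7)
The plan is to chain together Proposition~\ref{3:1} with the functorial reinterpretation of the Weil restriction in terms of the functor $\cW$ defined just above the statement, and then to invoke the classical fact that finitely generated projective modules of constant rank over a semi-local ring are free. By Proposition~\ref{3:1}, $\adqG$ is already identified with $\underset{\rD/\spec k}{\prod}\aff^{1}_\rD$; because $\rG$ is semisimple simply connected of constant type, its Dynkin scheme $\rD$ is finite \'etale over $\spec k$ of constant degree $n$ (the rank of $\rG$).

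Next I would unwind the Weil restriction in terms of $\cW(\cO_\rD)$. For any $k$-algebra $\rT$, the definition of the Weil restriction gives
$$\bigl(\underset{\rD/\spec k}{\prod}\aff^{1}_\rD\bigr)(\rT) \;=\; \Gamma(\rD\times_k\rT,\cO_{\rD\times_k\rT}) \;=\; \Gamma(\rT,\cO_\rD\otimes_{k}\cO_\rT) \;=\; \cW(\cO_\rD)(\rT),$$
so the Weil restriction is represented by $\cW(\cO_\rD)$ as a functor on $k$-algebras.

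Finally, $\cO_\rD$ is a finitely generated projective $k$-module of constant rank $n$, since $\rD/\spec k$ is finite \'etale of degree $n$. Over the semi-local ring $k$, every such module is free (cf.\ Bourbaki, \emph{Commutative Algebra}, Chap.~II, \S5), so $\cO_\rD\simeq k^{n}$ and hence $\cW(\cO_\rD)\simeq\cW(k^{n})\simeq\aff^{n}_{k}$. Combining this with Proposition~\ref{3:1} yields the desired isomorphism $\adqG\simeq\aff^{n}_{k}$. The only nontrivial input is the freeness theorem for semi-local rings; the remainder of the argument is formal unwinding of definitions and a direct appeal to Proposition~\ref{3:1}.
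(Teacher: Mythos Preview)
Your proof is correct and follows essentially the same route as the paper's: identify the Weil restriction $\prod_{\rD/\spec k}\aff^{1}_{\rD}$ with $\cW$ of the pushforward $\pi_{\ast}\cO_{\rD}$, observe that this $k$-module is finite projective (since $\rD\to\spec k$ is finite \'etale), and conclude freeness from $k$ being semi-local. The paper phrases the first step slightly differently, passing through $\aff^{1}_{\rD}\simeq\cV(\cE)\simeq\cW(\cE^{\vee})$ for $\cE=\cO_{\rD}$ before identifying $\prod_{\rD/\spec k}\cW(\cE^{\vee})$ with $\cW(\pi_{\ast}\cE^{\vee})$, but since $\cE^{\vee}\simeq\cO_{\rD}$ this amounts to the same computation you wrote down directly.
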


\begin{proof}
Let $\cE$ be the free $\cO_\rD$-module of rank one. Then
$\aff^{1}_\rD\simeq\cV(\cE)\simeq\cW(\cE^{\vee})$ as $\rD$-functors.
Let $\pi:\rD\ra \spec (k)$ be the structure morphism. Then
$\pi_{\ast}\cE^{\vee}$ is just $\cE^{\vee}$  viewed as a $k$-module,
and we have
$$\cW(\pi_{\ast}\cE^{\vee})(\rA)=\cE^{\vee}\otimes_{k}\rA\simeq\cE^{\vee}\otimes_{\cO_\rD}(\cO_\rD\otimes_{k}\rA)=(\underset{\rD/\spec k}{\prod}\cW(\cE^{\vee}))(\rA),$$ for all $k$ algebra $\rA$. Therefore, $\cW(\pi_{\ast}\cE^{\vee})\xrightarrow{\sim}\underset{\rD/\spec k}{\prod}\cW(\cE^{\vee})$.
Since $\pi$ is a finite \'etale morphism, $\cO_\rD$ is a projective
$k$-module of finite type, so is $\pi_{\ast}\cE^{\vee}$. Therefore,
$\pi_{\ast}\cE^{\vee}$ is a free k-module for $k$ semi-local. By
Proposition~\ref{3:1}, $\adqG\simeq\aff^{n}_k$.
\end{proof}
\end{subsection}
%%%%%%%%%%%%%%%%%%%%%%%%%%%%%%%%%%%%%%%%%%%%%%%%%%%%%%%%%%%%%%%%%%%%%%%%%%%%%%%%%%%%%%%%%%
\begin{subsection}{ Generalized Steinberg's cross-section}
%Throughout this section, we let $k$ be a commutative ring, $\rG$ be a semisimple simply connected group over $k$
%which admits a Chevalley group scheme with rank $n$, and
Let $\rp:\rG\ra\adqG$ be the natural map. Recall that a
cross-section of $\rp$ is a closed subscheme $\rN$ of $\rG$ such
that $\rp$ is  a bijection between functors $\rN$ and $\adqG$.
Suppose $k$ is a perfect field. Then $\adqG$ is isomorphic to
$\aff^{n}_k$, and Steinberg proved that if $\rG$ has a Borel
subgroup, then there exists a cross-section of p. In particular, for
$\rG$ without type $\rA_{2m}$, the Steinberg's cross-section is
contained in $\rG^{reg}$, where $\rG^{reg}$ denotes the open subset
of $\rG$ which consists of regular elements (~\cite{St}, Thm. 1.4,
1.5, and 1.6). In the following, we will show that the similar
result holds for $k$ arbitrary commutative ring and $\rG$
quasi-split without $\rA_{2m}$ components.

We start with the definition of a regular element over an arbitrary
base scheme $\rS$:
\begin{defi}~\label{3.5}
Let $\rG$ be a reductive group with constant type over a scheme
$\rS$. Let $n$ be the rank of $\rG$ and $\rS'$ be an $\rS$-scheme.
An element $g\in\rG(\rS')$ is called regular if its centralizer
$\rC_{\rG}(g)$ is of minimal dimension in all fibers, i.e.
$\Dim(\rC_{\rG}(g))_{s'}<n+1$, for all $s'\in\rS'$.
\end{defi}
\begin{rema}
Keep all the notation above. Since $\Dim(\rC_{\rG}(g))_{s'}\geq n$
for all $s'\in\rS'$, the condition $\Dim(\rC_{\rG}(g))_{s'}<n+1$ is
equivalent to the condition $\Dim(\rC_{\rG}(g))_{s'}=n$.
\end{rema}

Let $\rG$ be as in Definition~\ref{3.5} and $\eta\in\rG(\rG)$ be the
identity map on $\rG$. Define $$\rG^{reg}=\{g\in\rG\mid\
\Dim\rC_{\rG}(\eta)_{g}< n+1 \}.$$ Then by Chevalley's
semi-continuity Theorem (\cite{EGA4}, 13.1.3), $\rG^{reg}$ is an
open subscheme of $\rG$. Moreover, we have $g\in\rG(\rS')$ is
regular if and only if $g\in\rG^{reg}(\rS')$. To see this, note that
we can regard $\rS'$ as a $\rG$-scheme with structure morphism $g$,
and under this morphism, the image of $\eta$ in $\rG(\rS')$ is $g$.
Then we have $\rC_{\rG}(g)=\rC_{\rG}(\eta)\times_{\rG}\rS'$ and
$\rC_{\rG}(g)_{s'}=\rC_{\rG}(\eta)_{g(s')}\times_{g(s')}s'$.
Therefore, $\Dim(\rC_{\rG}(g))_{s'}<n+1$ if and only if
$\Dim\rC_{\rG}(\eta)_{g(s')}< n+1$, which means that $g\in\rG(\rS')$
is regular if and only if $g\in\rG^{reg}(\rS')$.
\begin{rema}
An element $g\in\rG(k)$ is said to be semisimple regular if it is
semisimple regular on each geometrical fibre. Note that for $\rG$ a
reductive group, the definition of a regular element in~\cite{SGA3}
is the definition of a semisimple regular element here (\cite{SCC1},
Exp. 7, Def. 2). The functor of semi-simple regular elements is also
representable by an open subscheme of $\rG$ (\cite{SGA3}, Exp. XIII,
3.1, 3.2).
\end{rema}
\begin{rema}
Given $g\in\rG(\rS')$, since $\rG^{reg}$ is an open subscheme of
$\rG$, $g$ is regular if and only if $g$ is regular at each closed
point of $\rS'$.
\end{rema}
\begin{theo}~\label{3.2}
let $k$ be a commutative ring, $\rG$ be a semisimple simply
connected group of constant type with rank n over $k$. Let $\rG_0$
be the Chevalley group scheme $\rG_0$ associated to $\rG$. If $\rG$
is quasi-split and without components of type $\rA_{2m}$, then there
exists a cross section $C:\underset{\rD/\spec
(k)}{\prod}\aff^{1}_\rD\ra\rG^{reg}$ of $p$, \\i.e., $\rp\circ C$ is
an isomorphism of $\underset{\rD/\spec (k)}{\prod}\aff^{1}_\rD$.
\end{theo}

\begin{proof}
We can fix a quasi-pinning $\rE$ of $\rG$, and a pinning $\rE_0$ of
$\rG_0$. Since $(\rG,\rE)$ is a form of $(\rG_{0,k},\rE_0)$ twisted
by a right $\underline{\Aut}({\rG_{0,k},\rE_0})$-torsor
$\Isomext(\rG_{0,k},\rG)$, we only need to prove the theorem for
$\rG_0$, and show that the section $C_0$ which we construct is
$\underline{\Aut}({\rG_{0,k},\rE_0})$-equivariant.

Let $\rT_0$ be a maximal torus in $\rG_0$, ($\rM_0$, $\rM_0^{\vee}$,
$\rR_0$, $\rR_0^{\vee}$) be the root datum with respect to $\rT_0$
and $\Delta_0$ be a fixed base of $\rR_0$ with respect to the
pinning $\rE_0$. Let $\lieg_0$ be the Lie algebra of $\rG_0$, and
$\liet_0$ be the Lie algebra of $\rT_0$. Let
$\lieg_0=\liet_0\underset{\alpha\in\rR_0}{\oplus}\lieg_0^{\alpha}$
be the decomposition with respect to the adjoint action of $\rT_0$
on $\lieg_0$. For each $\alpha\in\Delta_0$, let
$\rX_\alpha\in\Gamma(\spec k,\lieg_0^{\alpha})^\times$ which is
defined in $\rE_0$. We know that for each $\alpha\in\rR_0$, there is
an unique morphism $\expo_\alpha:\cW(\lieg_0^{\alpha})\ra\rG_0$
which induces the canonical inclusion over the Lie algebra
$\lieg_0^{\alpha}\ra\lieg_0$. Moreover, $\expo_\alpha$ is a closed
immersion (\cite{SGA3} Exp. XXII, Thm. 1.1), and we let $\rU_\alpha$
be the image of $\expo_\alpha$. Let
$p_\alpha:\mathbb{G}_{\mathrm{a},k}\ra\rG_0$ be defined as
$p_\alpha(a)=\expo_\alpha(a\rX_{\alpha})$. For each
$\alpha\in\Delta$, let $w_\alpha$ be the element defined by
$\expo_\alpha(\rX_\alpha)\expo_{-\alpha}(-\rX_\alpha^{-1})\expo_\alpha(\rX_\alpha)$.
Note that the image of $w_\alpha$ in the Weyl group is exactly the
reflection with respect to $\alpha$ (\cite{SGA3} Exp. XXII, 1.5).
Let us number the roots in $\Delta$ in the order such that roots in
the same orbit under $\underline{\Aut}({\rG_{0,k},\rE_0})$ are given
consecutive numbers. Let $\chi_i$ be the fundamental weight
associated to $\alpha_i$ and the $i$-th coordinate of $\aff^{n}_k$
correspond to $\chi_i$.

Define $C_0:\aff^{n}_k\ra\rG_0$ as
$C_0(a_1,...a_n)=\prod^{n}_{i=1}p_{\alpha_i}(a_i\rX_{\alpha_i})w_{\alpha_i}$.

Let $f\in\underline{\Aut}({\rG_{0,k},\rE_0})$. By the definition of
$\underline{\Aut}({\rG_{0,k},\rE_0})$, $f$ permutes the roots in
$\Delta$, and
$f(\expo_\alpha(\rX_\alpha))=\expo_{f(\alpha)}(\rX_{f(\alpha)})$.
Thus, $f$ also permutes the $\sigma_\alpha's$. We can regard $f$ as
an element of the symmetric group which permutes $\{1,...,n\}$ and
maps $\alpha_i$ to $\alpha_{f(i)}$. Since $\rG_0$ has no $\rA_{2m}$
components, there are no edges between $\alpha_i$ and
$\alpha_{f(i)}$ in the Dynkin diagram of $\rG_0$ and
$\rU_{\alpha_i}$ commutes with $\rU_{\alpha_{f(i)}}$. From the way
we number the roots, we have that $\rU_{\alpha_i}$ is next to
$\rU_{\alpha_{f(i)}}$. Therefore,
\begin{align*}
 C_0(f.(a_1,..., a_n))&=C_0(a_{f^{-1}(1)},..., a_{f^{-1}(n)})\\
&=\prod^{n}_{i=1}p_{\alpha_i}(a_{f^{-1}(i)}\rX_{\alpha_i})w_{\alpha_i}\\
&=\prod^{n}_{i=1}p_{\alpha_{f(i)}}(a_i\rX_{\alpha_{f(i)}})w_{\alpha_{f(i)}}\\
&=f(\prod^{n}_{i=1}p_{\alpha_i}(a_i\rX_{\alpha_i})w_{\alpha_i}),
\end{align*}

This shows that $C_0$ is stable under
$\underline{\Aut}({\rG_{0,k},\rE_0})$. Since $\rp\circ C_0$ is an
isomorphism at each point $x\in\spec(k)$ (\cite{St}, Thm. 1.4),
$\rp\circ C_0$ is an isomorphism (\cite{EGA4}, 17.9.5). Moreover,
for a k-algebra $\rR$ and $y\in(\underset{\rD/\spec
(k)}{\prod}\aff^{1}_\rD)(\rR)$, $C_0(y)$ is regular at each fiber
(\cite{St}, 7.9, 7.14), so $C_0$ factors through $\rG_0^{reg}$. The
theorem then follows.
\end{proof}

Recall that for an infinite field $k$, any reductive group has a
semisimple regular element over k (\cite{SGA3}, Exp. XIV, 6.8). If
$k$ is a finite field, Lehrer proves that any semisimple simply
connected group contains a semisimple regular conjugacy class
(\cite{Leh}, Cor. 3.5), and by Lang's Theorem, it implies the
existence of a semisimple regular element over k. Furthermore, for a
semisimple simply connected group over a field, the centralizer of a
semisimple regular element is a maximal torus. Here, we want to show
that the same properties also holds for $k$ a semilocal ring.

\begin{prop}~\label{3.4}
Let $\rG$ be as in the beginning of this section. Suppose that $k$
is a semilocal ring and $\rG$ is quasi-split. Then $\rG$ has a
semisimple regular elements $g\in\rG(k)$. Moreover,
$\underline{\Cent}_{\rG}(g)$ is a maximal torus.
\end{prop}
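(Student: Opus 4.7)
The plan is to exhibit $g$ as a $k$-point of a maximal $k$-torus $\rT\subset\rG$, and then to identify its centralizer via the root space decomposition after a trivializing étale base change.

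Since $\rG$ is quasi-split, I fix a Borel subgroup $\rB\subset\rG$ together with a maximal $k$-torus $\rT\subset\rB$, and let $\tilde k/k$ be a finite étale Galois cover with group $\Gamma$ splitting $\rT$; write $\rR\subset\rX^{\ast}(\rT_{\tilde k})$ for the root system. The open subscheme
\[
\rT^{\mathrm{reg}}\ :=\ \rT\setminus\textstyle\bigcup_{\alpha\in\rR}\ker(\alpha)\ \subset\ \rT
\]
is $\Gamma$-stable and so descends to an open subscheme of $\rT$ defined over $k$. It suffices to exhibit $g\in\rT^{\mathrm{reg}}(k)$: such a $g$ is semisimple (lying in a torus) and regular (no root vanishes on it), and its centralizer will be computed by the root analysis below.

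For the existence, let $\mathfrak J$ be the Jacobson radical of $k$, so that CRT gives $k/\mathfrak J\simeq\prod_{i=1}^{r}\kappa_{i}$ with $\kappa_{i}=k/\im_{i}$. In each residue field I produce a regular element of $\rT(\kappa_{i})$ using the results recalled in the paragraph preceding the statement: when $\kappa_{i}$ is infinite this follows from~\cite{SGA3}, Exp.~XIV, 6.8 together with density of $\rT(\kappa_{i})$-points in $\rT_{\kappa_{i}}$, and when $\kappa_{i}$ is finite it follows from Lehrer's theorem combined with Lang's theorem, noting that $\rG_{\kappa_{i}}$ is quasi-split simply connected so that every semisimple regular conjugacy class meets the $k$-rational torus $\rT_{\kappa_{i}}$. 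Combining via CRT yields $\bar g\in\rT(k/\mathfrak J)$ with each component regular. I then invoke that for a torus $\rT$ over a semilocal ring $k$, the reduction $\rT(k)\to\rT(k/\mathfrak J)$ is surjective: by isotriviality of $\rT$ this reduces to the split case, and for $\rGm^{n}$ it amounts to the surjectivity of $k^{\times}\to(k/\mathfrak J)^{\times}=\prod_{i}\kappa_{i}^{\times}$, which holds for semilocal $k$ since an element of $k$ is a unit iff it is a unit modulo each $\im_{i}$. Lifting $\bar g$ to $g\in\rT(k)$, and noting that regularity is detected on residue fields (because $\mathfrak J\subset\im_{i}$ for all $i$), gives $g\in\rT^{\mathrm{reg}}(k)$.

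For the centralizer, after base change to $\tilde k$ the root-space decomposition $\lieg_{\tilde k}=\liet_{\tilde k}\oplus\bigoplus_{\alpha\in\rR}\lieg^{\alpha}_{\tilde k}$ together with the fact that $\Ad(g)-\mathrm{id}$ acts on each $\lieg^{\alpha}_{\tilde k}$ as the unit $\alpha(g)-1$ forces the Lie algebra of $\underline{\Cent}_{\rG}(g)_{\tilde k}$ to coincide with $\liet_{\tilde k}$; hence $\underline{\Cent}_{\rG}(g)_{\tilde k}^{\circ}=\rT_{\tilde k}$. Since $\rG$ is simply connected, Steinberg's theorem on the connectedness of centralizers of semisimple elements then yields $\underline{\Cent}_{\rG}(g)_{\tilde k}=\rT_{\tilde k}$, and étale descent along $\tilde k/k$ gives $\underline{\Cent}_{\rG}(g)=\rT$, a maximal torus. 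The main obstacle is the passage from fiberwise regular elements to an actual $k$-point: for a general smooth scheme over a non-Henselian semilocal ring this lifting can fail, and what saves the argument is the particular structure of a torus, which reduces the question to CRT for units in a semilocal ring.
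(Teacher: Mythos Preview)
Your strategy of locating $g$ inside the fixed maximal torus $\rT$ has a genuine gap at small finite residue fields. You assert that for $\kappa_i$ finite, ``every semisimple regular conjugacy class meets the $k$-rational torus $\rT_{\kappa_i}$,'' but this is false: a semisimple regular element of $\rG(\kappa_i)$ lies in \emph{some} maximal $\kappa_i$-torus, not necessarily in the one you fixed. Concretely, take $\rG=\mathrm{SL}_2$ (split, hence quasi-split) over a semilocal $k$ having a residue field $\kappa_i=\mathbb{F}_2$. The diagonal torus satisfies $\rT(\mathbb{F}_2)=\{1\}$, so $\rT^{\mathrm{reg}}(\mathbb{F}_2)=\emptyset$; the regular semisimple elements of $\mathrm{SL}_2(\mathbb{F}_2)$ (those of order~$3$) lie only in the nonsplit torus. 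Hence no element of $\rT(k)$ can be regular at that closed point, and your lifting scheme cannot produce the desired $g$.

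The paper sidesteps this by not insisting that $g$ lie in $\rT$. Over each $\kappa_i$ it writes a semisimple regular element as a word in $\rU^{+}(\kappa_i)$ and $\rU^{-}(\kappa_i)$ (invoking \cite{St1}, Lemma~64, for finite fields), then lifts the individual letters via the surjections $\rU^{\pm}(k)\twoheadrightarrow\prod_i\rU^{\pm}(\kappa_i)$ of \cite{SGA3}, Exp.~XXII, 5.9.10; the resulting product $g\in\rG(k)$ is semisimple regular because this is checked on closed fibers. Only \emph{after} $g$ is in hand does the paper produce a maximal torus containing it (\cite{SGA3}, Exp.~XIII, Thm.~3.1) and compare with $\underline{\Cent}_{\rG}(g)$ fiberwise via \cite{EGA4}, 17.9.5. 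A secondary issue: your reduction ``by isotriviality of $\rT$ this reduces to the split case'' for the surjectivity of $\rT(k)\to\rT(k/\mathfrak{J})$ is not justified; knowing $\rT_{\tilde k}\simeq\rGm^n$ gives control over $\rT(\tilde k)\to\rT(\tilde k/\mathfrak{J}\tilde k)$, but descending to $k$ requires handling an $H^1(\Gamma,-)$ obstruction you do not address.
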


\begin{proof}
Let $\{s_i\}$ be the set of closed points of $k$ and $\{\kappa_i\}$
be the set of corresponding residual fields. Then from the above
discussion, we know that $\adqG$ has a semisimple regular element
over $\kappa_i$. Let us fix a Borel subgroup $\rB$ of $\rG$ and let
$\rU^+$, $\rU^-$ be the unipotent radical of $\rB$ and the opposite
of $\rB$. Then $\rU^+(k)\ra\Pi_i\rU^+(\kappa_i)$ (resp. $\rU^-$) is
surjective (\cite{SGA3}, Exp. XXII, 5.9.10). On the other hand, for
each $\kappa_i$, we can find a semisimple regular element generated
by $(\rU^+)(\kappa_i)$ and $(\rU^-)(\kappa_i)$ (for finite fields,
see~\cite{St1}, Lemma 64). Therefore, we can find a semisimple
regular element $g\in\rG(k)$. Since every semisimple regular element
is contained in a maximal torus (\cite{SGA3}, Exp. XIII, Thm. 3.1),
$g$ is contained in a maximal torus $\rT$ of $\rG$ which is in turn
contained in $\underline{\Cent}_{\rG}(g)$, and on each geometrical
fibre, they are isomorphic, so $\rT=\underline{\Cent}_{\rG}(g)$
(\cite{EGA4}, 17.9.5).
\end{proof}
\begin{rema}
For $\rG$ as in Proposition~\ref{3.4}. If $\rG$ is without
$\rA_{2m}$ components, then we can show the above Proposition using
Steinberg's cross section. Note that $\adqG$ is isomorphic to
$\underset{\rD/\spec k}{\prod}\aff^{1}$ which is isomorphic to
$\aff^{n}$, so $(\adqG)(k)\ra\Pi_i(\adqG)(\kappa_i)$ is surjective.
Pick $x\in(\adqG)(k)$ which is mapped to the semisimple regular
class of $(\adqG)(\kappa_i)$ for each i. Then by Theorem~\ref{3.2},
there is $g\in\rG(k)$ which is mapped to the semisimple regular
class $x$. This implies $g$ is semisimple regular.
\end{rema}
\begin{rema}
Note that if $\rG$ is a semisimple $k$-group which is not simply
connected, then the connected component of the centralizer of a
semisimple regular element is not necessarily a torus. If
$\underline{\Cent}_{\rG}(g)$ is smooth, then
$\underline{\Cent}_{\rG}(g)^{\circ}$ will be representable
(\cite{SGA3}, Exp. $\mathrm{VI_B}$, Thm 3.10) and hence will be a
torus. However, $\underline{\Cent}_{\rG}(g)$ is not always smooth.
For example, let $k=\compl[[x]]$ and $\rG=\rPGL_{2,k}$. Let $g$ be
the matrix $\left( \begin{array}{cc}
1+x & 0  \\
0 &  -1  \\
\end{array} \right)$. Then $g$ is semisimple regular and contained in a unique maximal torus $\rT$.
Therefore
$\underline{\Cent}_{\rG}(g)\subseteq\underline{\Nor}_{\rG}(\rT)$ and
we can define $ \rW_{g}$ as the closed subgroup scheme of the Weyl
group which fixes $g$. In our case, $\rT$ is a split torus, and the
corresponding Weyl group is a constant group scheme. Suppose
$\underline{\Cent}_{\rG}(g)$ is smooth. Then
$\underline{\Cent}_{\rG}(g)^{\circ}$ is a torus and we have the
following exact sequence:
\begin{center} $\xymatrix {0\ar[r]&
\underline{\Cent}_{\rG}(g)^{\circ} \ar[r]&
\underline{\Cent}_{\rG}(g)\ar[r]&
 \rW_{g}\ar[r]& 0}$.\end{center}
Since $\underline{\Cent}_{\rG}(g)$ and
$\underline{\Cent}_{\rG}(g)^{\circ}$ are flat, $\rW_{g}$ is also
flat (\cite{SGA3}, Exp. $\mathrm{VI_B}$, Prop 9.2 (xi)). So
$\rW_{g}$ is a constant group scheme. However, $\rW_{g}$ is trivial
at the generic point and has an order 2 element $\left(
\begin{array}{cc}
0 & 1  \\
1&  0  \\
\end{array} \right)$ at the closed point, which means $\underline{\Cent}_{\rG}(g)$ cannot be smooth!
\end{rema}
\end{subsection}

\end{section}
\subsection*{Acknowledgements}
Thanks to Brian Conrad, Mathieu Florence, Philippe Gille, and
Patrick Polo for their precious suggestions and comments.

\newpage

\def\refname{R\MakeLowercase{eferences}}

\end{document}